\let\csname equation*\endcsname\relax
\let\csname endequation*\endcsname\relax
\newtheorem{example}{Example}[section]
\title{An ADMM-LAP method for total variation myopic deconvolution of adaptive optics retinal images}
\author{Xiaotong Chen\thanks{School of Mathematical Sciences, Dalian University of Technology, Dalian, Liaoning 116025, China. ({\tt chenxiaotong@mail.dlut.edu.cn, yubo@dlut.edu.cn}).}
\and James~L.~Herring\thanks{Department of Mathematics, University of Houston, Houston, TX 77204, U.S.A. ({\tt herrinj@gmail.com}).}
\and James~G.~Nagy\thanks{Department of Mathematics, Emory University, Atlanta, GA 30322, U.S.A. ({\tt jnagy@emory.edu, yxi26@emory.edu}).}
\and Yuanzhe~Xi\footnotemark[3]
\and Bo Yu\footnotemark[1]
}
\begin{document}

\maketitle

\begin{abstract}
Adaptive optics (AO) corrected flood imaging of the retina is a popular technique for studying the retinal structure and function in the living eye. However, the raw retinal images are usually of poor contrast and the interpretation of such images requires image deconvolution. Different from standard deconvolution problems where the point spread function (PSF) is completely known, the PSF in these retinal imaging problems is only partially known which leads to the more complicated myopic (mildly blind) deconvolution problem. In this paper, we propose an efficient numerical scheme for solving this myopic deconvolution problem with total variational (TV) regularization. {First, we apply the alternating direction method of multipliers (ADMM) to tackle the TV regularizer. Specifically, we reformulate the TV problem as an equivalent equality constrained problem where the objective function is separable, and then minimize the augmented Lagrangian function by alternating between two (separated) blocks of unknowns to obtain the solution.} Due to the structure of the retinal images, the subproblems with respect to the fidelity term appearing within each ADMM iteration are tightly coupled and a variation of the Linearize And Project (LAP) method is designed to solve these subproblems efficiently. The proposed method is called the ADMM-LAP method. Theoretically, we establish the subsequence convergence of the ADMM-LAP method to a stationary point. Both the theoretical complexity analysis and numerical results are provided to demonstrate the efficiency of the ADMM-LAP method.
\end{abstract}

\begin{keywords}
total variation, retinal imaging, image restoration, myopic deconvolution
\end{keywords}


\section{Introduction}
\label{intro}
Image restoration is an important topic in image processing which is widely used in many areas, such as astronomical imaging, medical imaging and restoring aging and deteriorated films. The goal of image restoration is to reconstruct the best possible approximation of the clean, original image from an observed, blurred and noisy image. The basic image restoration problem can be described as a linear inverse problem
\begin{equation}
\bm d=A\bm x+\bm e,
\label{eq:pb1}
\end{equation}
where $A\in \mathbb{R}^{n^{2}\times n^{2}}$ is an ill-conditioned blurring matrix defined by the point spread function (PSF)\cite{Hansen2006}, $\bm d\in \mathbb{R}^{n^2}$ represents the observed, blurred and noisy image, $\bm e\in \mathbb{R}^{n^2}$ denotes the additive noise and $\bm x\in \mathbb{R}^{n^2}$ denotes the unknown true image to be restored.

The PSF is generally assumed to be perfectly known in standard image restoration techniques. However, this is not always the case. In many applications, the true PSF (and therefore the blurring matrix $A$) is unknown or partially known. This results in blind deconvolution problems where image restoration also requires recovering or approximating the PSF. If the PSF can be parameterized by a small number of unknown parameters, the problem
can be considered mildly/partially blind, or {\em myopic}.
Adaptive optics (AO) corrected flood imaging of the retina is one such myopic deconvolution problem that has received much attention. In \cite{Blanco2011, Blanco2014}, the authors present an imaging model  that can transfer the 3D model to a 2D model with the global PSF being an unknown linear combination of a few PSFs. Thus, problem \eqref{eq:pb1} requires estimating both the combination coefficients of $A$ and the true image $\bm x$ in this model.

In order to guarantee the fidelity of the recovery, it is necessary to add a regularizer. There are two well-known types of regularizers for problem \eqref{eq:pb1}: one is Tikhonov and the other is total variation (TV). Tikhonov regularization was first proposed in \cite{Tikhonov1977} with a quadratic penalty added to the objective function. Due to its quadratic property, it is inexpensive to minimize the objective function. Thus, Tikhonov regularization is computationally efficient and has been widely used. However, one disadvantage of the Tikhonov approach is that it tends to over smooth the image and fails to preserve important image details such as sharp edges. The TV regularizer, first proposed by Rudin, Osher and Fatemi in \cite{Osher1992}, has also been widely adopted in image reconstruction problems \cite{Osher1994,Wang2008, Tao2009, Hu2019}. Since the TV approach uses the summation of the variation of the image $\bm x$ at all pixels to control the norm (or semi-norm) and the smoothness of the solution, it has been shown both experimentally and theoretically that the TV approach can effectively preserve sharp edges and keep the important features of the restored image.

Following the AO retinal imaging model in \cite{Blanco2011, Blanco2014}, in this paper, we consider the myopic deconvolution model with TV regularization as follows
\begin{equation}
\label{eq:objFctn1}
\begin{aligned}
\min_{\bm x \in \mathcal{C}_{\bm x}, \bm w \in \mathcal{C}_{\bm w}}  &\Phi\left(\bm x, \bm w\right) = \frac{\mu}{2} \|{ A}\left(\bm w\right) \bm x - \bm d \|_2^2+{\rm TV}\left(\bm x\right) \\
\text{s.t.} \quad \  &\sum_{j = 1}^p w_j= 1,\\
\end{aligned}
\end{equation}
where $\mathcal{C}_{\bm x} = \left\{ \bm x \mid x_i \geq 0 \text{ for } i=1,\ldots,n^2\right\}, \mathcal{C}_{\bm w} = \left\{ \bm w \mid w_j \geq 0 \text{ for } j=1,\ldots,p\right\},$ $\mu$ is a positive parameter that is used to balance the two terms in the objective function, ${\rm TV}$ denotes the TV regularization term, $\bm x \in \mathbb{R}^{n^2}$ is a vectorized version of the unknown $n\times n$ image to be recovered, $\bm w \in \mathbb{R}^p$ denotes unknown weights parameterizing the blurring matrix $A\left(\bm w\right) \in \mathbb{R}^{n^2 \times n^2}$, and $\bm d \in \mathbb{R}^{n^2}$ is the observed, blurred noisy image (data). For the imaging model we use, the blurring matrix $A\left(\bm w\right)$ is a weighted sum of $p$ known blurring matrices $A_j$ with the form
\begin{equation*}
A(\bm w) = \displaystyle{\sum_{j=1}^p} w_j A_j  = w_1 A_1 + w_2 A_2 + \cdots + w_p A_p.
\end{equation*}

Note that the fidelity term in the objective function in (\ref{eq:objFctn1}) is nonconvex and the TV regularization term is nondifferentiable and nonlinear. This poses some computational challenges in the optimization problem. In particular, due to the existence of the parameters $\bm w$ and the nonconvexity of the objective function, approaches such as the fast iterative shrinkage-thresholding algorithm (FISTA) \cite{Beck2009, Chen2019} cannot be applied directly to solve the problem. However, alternative approaches have been proposed to solve the optimization problem with TV regularization in the literature. One particularly efficient scheme is the alternating direction method of multipliers (ADMM) \cite{Wang2008, Tao2009, Zhang2017}. ADMM was first developed to solve convex optimization problems by breaking them into subproblems, each of which are then easier to handle.
Recent work has shown that ADMM can also perform well for a variety of applications involving nonconvex objective functions or nonconvex sets \cite{Boyd2011, Wang2019, Lai2014, Xu2012, Yang2017}. Inspired by the success of ADMM on nonconvex problems, we consider using it as the optimization method to tackle problem \eqref{eq:objFctn1}.

The main contribution of this paper can be summarized as follows:
\begin{itemize}
\item We implement an efficient algorithm called ADMM-LAP for myopic deconvolution problems with TV regularization arising from the AO retinal image restoration. Specifically, we first apply ADMM as an outer optimization method to tackle the TV regularizer and then apply the Linearize And Project (LAP) method as an inner optimization method to solve the tightly coupled subproblems arising within each ADMM iteration.
We establish the subsequence convergence of ADMM-LAP to a stationary point and conduct a complexity analysis of ADMM-LAP to demonstrate its computational efficiency for myopic deconvolution problems with TV regularization.
\item
We present extensive numerical experiments to illustrate the effectiveness of the ADMM-LAP method. In addition, we compare the performance of ADMM-LAP with a benchmark method called ADMM-BCD where ADMM is applied to tackle the TV regularization while block coordinate descent (BCD) is applied to solve the coupled subproblems. Compared to ADMM-BCD, ADMM-LAP converges faster and can reach smaller relative errors for both the restored image and the obtained parameters.
\end{itemize}

The paper is organized as follows. In Section 2, we
introduce a general formulation of the AO retinal imaging problem. In Section 3, we first briefly review the iteration format of ADMM  and introduce a variation of LAP for our problem, then propose the ADMM-LAP method and present the convergence results. A benchmark method ADMM-BCD is also discussed. The computational complexity of both methods are analyzed in this section. Numerical results are given in Section 4, and concluding remarks are drawn in Section 5.

\section{Retinal imaging problem}
\label{sec:2}
\subsection{AO retinal imaging model}
\label{ssec:ImageModel}
Adaptive optics (AO) is a well-known optoelectronic technique that compensates for the time-varying aberrations of the eye \cite{Blanco20112}. However, AO flood imaging suffers from an intrinsic limitation that leads to a loss in resolution because the object is three-dimensional, the image contains information from both the in-focus plane and the out-of-focus planes of the object. Hence, interpretation of such images requires an appropriate post-processing, including image deconvolution. In this paper, we focus on the myopic image deconvolution problem that arises from the AO retinal image restoration and propose the efficient ADMM-LAP method to solve the problem.

First, we describe the structure of the adaptive optics (AO) retinal imaging model. In the continuous setting, retinal imaging is typically modeled as a three-dimensional (3D) convolution~\cite{Blanco2011, Blanco2014}:

\begin{equation}
\label{eq:ImageModel}
\bm d_{3D} = \bm h_{3D} \ast_{3D} \bm x_{3D} + \bm e,
\end{equation}

\noindent where $\bm d_{3D} $ is the observed image, $\bm h_{3D}$ is the PSF, $\ast_{3D}$ is the three-dimensional convolution operator, $\bm x_{3D}$ is the true object, and $\bm e$ is the additive noise. If the true object is assumed to be shift invariant along the optical axis, i.e., the $z$-axis, then $\bm x_{3D}$ becomes separable with

\begin{equation*}
\bm x_{3D}\left(x,y,z\right) = \bm x_{2D}\left(x,y\right) \bm w\left(z\right),
\end{equation*}

\noindent where $\bm w\left(z\right)$ is the normalized flux emitted by the plane at depth $z$ such that $\int \bm w(z) dz = 1$. For reasonable optical setups, this shift invariance along the optical axis can be guaranteed to a sufficient degree to make this separability assumption meaningful~\cite{Blanco2011}.

In practice, retinal flood imaging systems typically image along a single plane of interest, a departure from the 3D model in~(\ref{eq:ImageModel}). This results in a 2D data image taken at a single depth. For depth $z = 0$, this gives the observed image

\begin{equation*}
\bm d_{2D}\left(x,y\right) = \bm d_{3D}\left(x,y,0\right).
\end{equation*}

\noindent The shift invariance assumption for the true image $\bm x_{3D}$ then implies that the two-dimensional PSF for the observed image $\bm d_{2D}(x,y)$ can be expressed as

\begin{equation*}
\bm h_{2D}\left(x,y\right) = \int \bm w\left(-z\right) \bm h_{3D}\left(x,y,z\right) dz.
\end{equation*}

Discretizing the above integral using a quadrature rule (a simple rectangle rule is often sufficient) we obtain the PSF for the observed retinal flood images as

\begin{equation*}
\bm h_{2D}\left(x,y\right) \approx \sum_{j=1}^{p} w_j \bm h_j \left(x,y\right),
\end{equation*}

\noindent where $\bm h_j \left(x,y\right) = \bm h_{3D}\left(x,y,z_j\right)$ is the 2D PSF taken at depth $z_j$ and $w_j =\bm w\left(z_j\right) \Delta z_j$ are weights with $\Delta z_j$ the thickness of the $j$th slice of the 3D image in the quadrature sum. Thus, in our myopic deconvolution model (\ref{eq:objFctn1}), the problem becomes determining the unknown weights $w_j$ that parameterize the PSF with the constraints that $\sum_{j=1}^{p} w_j = 1$ and $w_j \geq 0$ for all $j$. We point out that it can be seen from numerical experiments that the global PSF in (\ref{eq:objFctn1}) is normalized (i.e., all entries sum to approximately 1). Therefore, the myopic deconvolution model satisfies the fact that the PSF is energy preserving.

\subsection{Myopic deconvolution model with TV regularization}
For TV regularization in (\ref{eq:objFctn1}), we follow the notations used in \cite{Wang2008, Tao2009}: the discrete form of TV for a grayscale image $\bm x\in \mathbb{R}^{n^2}$ is defined as
\begin{equation}
\label{TVdef}
{\rm TV}\left(\bm x\right)=\sum_{i=1}^{n^2}\|D_{i}\bm x\|_{2},
\end{equation}
where for each $i$, $D_{i}\bm x\in\mathbb{R}^{2}$ represents the first-order finite difference of $\bm x$ at pixel $i$ in both horizontal and vertical directions. We note that the 2-norm in (\ref{TVdef}) can be replaced by the 1-norm. If the 2-norm is used, then we obtain the isotropic version of  TV, and if the 1-norm is used, we obtain the anisotropic version. In this paper, we will only treat the isotropic case for simplicity, but the treatment for the anisotropic case is completely analogous.

\textbf{Notations}: The two first-order global finite difference operators in horizontal and vertical directions are, respectively, denoted by $D^{(1)}, D^{(2)}\in \mathbb R^{n^2\times n^2}$.
$D_{i}\in\mathbb R^{2\times n^2}$ is a two-row matrix formed by stacking the $i$th row of $D^{(1)}$ on top of the $i$th row  of $D^{(2)}$ and
$D:=\left(D^{(1)};D^{(2)}\right)\in \mathbb R^{2n^2\times n^2}$ is the global first-order finite difference operator.

In order to deal with the restriction on the sum of the weights, we introduce an appropriate regularizer on $\bm w$:
\begin{equation}
\label{eq:objFctn1*}
\min_{\bm x \in \mathcal{C}_{\bm x}, \bm w \in {\mathcal{C}}_{\bm w}}  \frac{\mu}{2} \|{ A}\left(\bm w\right) \bm x - \bm d \|_2^2+\sum_{i=1}^{n^2}\|D_{i}\bm x\|_{2}+S(\bm w)
\end{equation}
where $\mathcal{C}_{\bm x} = \left\{\bm  x \mid x_i \geq 0 \text{ for } i=1,\ldots,n^2\right\},
{\mathcal{C}}_{\bm w} = \left\{ \bm w \mid w_j \geq 0 \text{ for } j=1,\ldots,p\right\}$ and $A(\bm w) = \displaystyle{\sum_{j=1}^p} w_j A_j $. The regularizer $S(\bm w)$ is defined as
\begin{equation*}
S(\bm w) = \frac{\xi}{2}(\bm {e}^{\top} \bm w - 1)^2,
\end{equation*}
\noindent where $\bm {e} \in \mathbb{R}^p$ is the vector all ones and $\xi > 0$ is a weighting parameter.
This regularizer penalizes solutions where the weights $w_k$ do not sum to $1$, so for appropriately large $\xi$, it can effectively enforce the summation constraint. We choose this option because it fits conveniently within the LAP framework in Section \ref{sec3.2}. Another possible alternative is a Lagrangian multiplier approach, which would strictly enforce the summation \cite{Nocedal1999}.

\section{Optimization schemes}
In this section, we first briefly review the alternating direction method of multipliers (ADMM), discuss how to adapt ADMM to solve \eqref{eq:objFctn1*} and discuss how to adapt a variation of the Linearize And Project (LAP) method to solve the related coupled subproblems. We then propose the ADMM-LAP method for solving \eqref{eq:objFctn1*} and give the convergence analysis. A benchmark method called  ADMM-BCD is also introduced in this section. We compare their computational complexity and show that ADMM-LAP is more efficient when solving \eqref{eq:objFctn1*}.

\subsection{ADMM splitting}
The classical ADMM is designed to solve the following 2-block optimization problem with linear constraints
\begin{equation}
\begin{aligned}
\min \limits_{\bm u,\bm v\in \mathbb{R}^{n^2}}^{}&f\left(\bm u\right)+g\left(\bm v\right) \\
{\rm s.t.}\ \ &B\bm u+C\bm v=\bm z,
\end{aligned}
\label{eq:admm1}
\end{equation}
where $f\left(\bm u\right):\mathbb{R}^{n^2}\rightarrow (-\infty, +\infty]$, $g\left(\bm v\right):\mathbb{R}^{n^2}\rightarrow (-\infty, +\infty]$ are convex functions, and $B,C\in \mathbb{R}^{n^{2}\times n^{2}}$ and $\bm z \in \mathbb{R}^{n^2}$ are given.

The augmented Lagrangian function for \eqref{eq:admm1} is given by
\begin{equation*}
 \mathcal{L}_{\beta}\left(\bm u,\bm v,\bm \lambda\right)=f\left(\bm u\right)+g\left(\bm v\right)+\left(\bm \lambda,B\bm u+C\bm v-\bm z\right)+\frac{\beta}{2}\|B\bm u+C\bm v-\bm z\|^2_{2},
\end{equation*}
where $\bm \lambda \in \mathbb{R}^{n^2}$ denotes the Lagrange multiplier, $\beta>0$ is the penalty parameter.

Given $\left(\bm u^0,\bm v^0,\bm \lambda^0\right)\in \mathbb{R}^{n^2}\times\mathbb{R}^{n^2}\times \mathbb{R}^{n^2}$, the penalty parameter $\beta>0$,  ADMM iterates as follows:
\begin{equation*}
\label{ADMM}
        \left\{\begin{aligned}
\bm u^{k+1}&=\underset{\bm u}{\text{argmin}}\mathcal{L}_{\beta}\left(\bm u,\bm v^k,\bm \lambda^k\right),\\
\bm v^{k+1}&=\underset{\bm v}{\text{argmin}}\mathcal{L}_{\beta}\left(\bm u^{k+1},\bm v,\bm \lambda^k\right),\\
\bm \lambda^{k+1}&=\bm \lambda^k-\beta\left(B\bm u^{k+1}+C\bm v^{k+1}-\bm z\right).
                          \end{aligned} \right.
\end{equation*}

In order to apply ADMM to solve (\ref{eq:objFctn1*}), we introduce artificial vectors
$\mathbf y_{i}\in\mathbb{R}^{2}, \ i=1,\dots,n^2$,  then we can rewrite (\ref{eq:objFctn1*}) in an equivalent form:

\begin{equation}
\label{eq:objFctn*}
\begin{aligned}
\min_{\bm x , \bm w, \bm y}
\frac{\mu}{2} \| &A(\bm w) \bm x - \bm d \|_2^2 + S(\bm w) +\sum_{i=1}^{n^2}\|\mathbf y_{i}\|_{2}+\delta_{\mathcal{C}_{\bm x}}(\bm x)+\delta_{{\mathcal{C}}_{\bm  w}}(\bm w),\\
\text{s.t.} \ \ \quad  &\mathbf y_{i}=D_{i}\bm x,\qquad i=1,...,n^2,
\end{aligned}
\end{equation}
where $\mathcal{C}_{\bm x} = \left\{\bm  x \mid x_i \geq 0 \text{ for } i=1,\ldots,n^2\right\},
{\mathcal{C}}_{\bm w} = \left\{ \bm w \mid w_j \geq 0 \text{ for } j=1,\ldots,p\right\}$, $A(\bm w) = \displaystyle{\sum_{j=1}^p} w_j A_j $ and $\delta_{\mathcal{C}}(\cdot)$ denotes the indicator function of ${\mathcal{C}}$, i.e.,
\begin{equation*}
\delta_{\mathcal{C}}(s)=\left\{\begin{array}{ll}{0,} & {s \in {\mathcal{C}}}, \\ {\infty,} & {s \notin {\mathcal{C}}}.\end{array}\right.
\end{equation*}

For convenience, let
\footnote[1]{Borrowing MATLAB notation, we use the semicolon in a vector to denote concatenation of terms.} $\bm y=[\bm y_{1};\bm y_{2}]\in\mathbb{R}^{2n^2}$, where $\bm y_{1}, \bm y_{2}$ are vectors of length $n^2$ and $[(\bm y_{1})_{i};(\bm y_{2})_{i}]=\mathbf y_{i}\in\mathbb{R}^{2}$ for $i=1,...,n^2$.
The augmented Lagrangian function for \eqref{eq:objFctn*} is then given by
\begin{eqnarray*}
\label{eq:lag1}
\mathcal{L}_{\beta}\left(\bm x, \bm w, \bm y,\bm \lambda\right) & :=& \frac{\mu}{2} \| A(\bm w) \bm x - \bm d \|_2^2+S(\bm w)+\delta_{\mathcal{C}_{\bm x}}(\bm x)+\delta_{{\mathcal{C}}_{\bm  w}}(\bm w) \\
& &+ \sum_{i=1}^{n^2}\left(\|\mathbf y_{i}\|_{2}-\bm \lambda_{i}^{T}\left(\mathbf y_{i}-D_{i}\bm x\right)+\frac{\beta}{2}\|\mathbf y_{i}-D_{i}\bm x\|^{2}_{2}\right), \nonumber
\end{eqnarray*}
where each $\bm \lambda_{i}\in \mathbb{R}^{2}$  and $\bm \lambda \in \mathbb{R}^{2n^2}$ is a reordering of $\bm \lambda_{i}$ similar to $\bm y$.

Consider $\left(\bm x,\bm w\right)$ as one block of variables and $\bm y$ as the other. We can now apply ADMM as the outer optimization method to tackle the TV regularization term in \eqref{eq:objFctn*}. Given the initial point $\left(\bm y^{0}, \bm x^{0}, \bm w^{0}, \bm \lambda^{0}\right)$,  the ADMM algorithm iteratively solves the following three subproblems:
\begin{equation*}
\begin{cases}
\begin{aligned}
\bm y^{k+1}&=\underset{\bm y}{\rm argmin}\mathcal{L}_{\beta}\left(\bm x^{k},\bm w^{k},\bm y,\bm \lambda^{k}\right),\\
(\bm x^{k+1},\bm w^{k+1})&=\underset{\bm x , \bm w}{\rm argmin}\mathcal{L}_{\beta}\left(\bm x,\bm w,\bm y^{k+1},\bm \lambda^{k}\right),\\
\bm \lambda^{k+1}&=\bm \lambda^{k}-\beta \left(\bm y^{k+1}-D\bm x^{k+1}\right).
\end{aligned}
\end{cases}
\end{equation*}

First, for the $\bm y$-subproblem, notice that minimizing $\mathcal{L}_{\beta}\left(\bm x^{k},\bm w^{k},\bm y,\bm \lambda^{k}\right)$ with respect to $\bm y$
is equivalent to minimizing $n^2$ two-dimensional problems of the form
\begin{equation}
\min_{\mathbf y_{i}\in\mathbb{R}^{2}}\|\mathbf y_{i}\|_{2}+\frac{\beta}{2}\left\|\mathbf y_{i}-\left(D_{i}\bm x^{k}+\frac{1}{\beta}\left(\bm \lambda^{k}\right)_{i}\right)\right\|^{2}_{2}, \quad i=1,2,...,n^2.
\label{eq:suby}
\end{equation}
The solution to \eqref{eq:suby} is given explicitly by the two-dimensional shrinkage \cite{Tao2009}
\begin{equation}
\label{ysubproblem}
\mathbf y_{i}^{k+1}={\rm max}\left\{\left\|D_{i}\bm x^{k}+\frac{1}{\beta}\left(\bm \lambda^{k}\right)_{i}\right\|_{2}-\frac{1}{\beta}, 0\right\}\frac{D_{i}\bm x^{k}+\frac{1}{\beta}\left(\bm \lambda^{k}\right)_{i}}{\|D_{i}\bm x^{k}+\frac{1}{\beta}\left(\bm \lambda^{k}\right)_{i}\|_{2}}, \ i=1,2,...,n^2.
\end{equation}

Second, denote
\begin{equation*}
R(\bm x, \bm y,\bm \lambda):=\sum_{i=1}^{n^2} \left(\|\mathbf y_{i}\|_{2} -\bm \lambda_i^{\top} \left(\mathbf y_{i} - D_i \bm x\right) + \frac{\beta}{2} \|\mathbf y_{i} - D_i \bm x \|^2_2 \right),
\end{equation*}
and define
\begin{equation*}
\hat{\Phi}(\bm x, \bm w, \bm y,\bm \lambda) := \frac{\mu}{2} \| A\left(\bm w\right) \bm x - \bm d \|_2^2+S(\bm w)+ R\left(\bm x, \bm y,\bm \lambda\right).
\end{equation*}
Then it can be shown that the $(\bm x,\bm w)$-subproblem is equivalent to the following problem:
\begin{equation}
\left(\bm x^{k+1},\bm w^{k+1}\right)
=\underset{\bm x \in \mathcal{C}_{\bm x}, \bm w \in {{\mathcal{C}}}_{\bm w}}{\rm argmin}\hat{\Phi}(\bm x, \bm w, \bm y ^{k+1},\bm \lambda ^{k}).
\label{xw:subproblem}
\end{equation}
Note that this is a tightly coupled optimization problem with element-wise bound constraints.

Finally, update the multiplier,
\begin{equation}
\label{eq:lambda}
\bm \lambda^{k+1}=\bm \lambda^{k}-\beta \left(\bm y^{k+1}-D\bm x^{k+1}\right).
\end{equation}
If the termination criterion is met, stop; else, set $k:=k+1$ and go to the $\bm y$-subproblem.

\subsection{LAP method}
\label{sec3.2}
To solve the tightly coupled $(\bm x,\bm w)$-subproblem (\ref{xw:subproblem}), we develop a variation of the Linearize And Project (LAP) method proposed by Herring \textit{et al}~\cite{Herring2017}. The LAP method is efficient for inverse problems with multiple, tightly coupled blocks of variables such as the problem under consideration. Its strengths include the option to impose element-wise bound constraints on all blocks of variables.

In this paper, we present the LAP method based on the normal equation approach instead of the least squares approach presented in the original paper \cite{Herring2017}. First, we consider the unconstrained problem where $\mathcal{C}_{\bm x}=\mathbb{R}^{n^2}$, ${\mathcal{C}}_{\bm w}=\mathbb{R}^{p}$. Denote the residual as $\bm r\left(\bm x,\bm w\right) := A\left(\bm w\right) \bm x - \bm d$. Then at the iterate $(\bm x,\bm w)$, the Jacobian with respect to the $\bm x$ block of variables is
\begin{equation}
\label{jx}
{\mathbf{J}}_{\bm x} = \nabla_{\bm x} \bm r(\bm x,\bm w)^\top  = \bm A(\bm w)^\top,
\end{equation}
and the Jacobian with respect to the $\bm w$ block of variables is
\begin{equation}
\label{jw}
\bm J_{\bm w} = \begin{bmatrix} (A_1 \bm x)^{\top} \\ (A_2 \bm x)^{\top} \\ \vdots \\ (A_p \bm x)^{\top} \end{bmatrix}.
\end{equation}

Computing the update step around the current iterate $\left(\bm x, \bm w\right)$ requires the gradient and Hessian of the objective function $\hat{\Phi}(\bm x, \bm w, \bm y^{k+1},\bm \lambda^{k})$. These are given by

\begin{align}
\nabla_{\bm x,\bm w} \hat{\Phi}(\bm x, \bm w, \bm y ^{k+1},\bm \lambda ^{k})  &= \mu\begin{bmatrix} {\mathbf{J}}_{\bm x}^{\top} \bm r \\ {\mathbf{J}}_{\bm w}^{\top} \bm r \end{bmatrix} + \begin{bmatrix} \nabla_{\bm x} R\left(\bm x, \bm y ^{k+1},\bm \lambda ^{k}\right) \\  \nabla_{\bm w} S(\bm w)  \end{bmatrix}, \\
\nabla^2_{\bm x,\bm w} \hat{\Phi}(\bm x, \bm w, \bm y ^{k+1},\bm \lambda ^{k})  &\approx  \mu\begin{bmatrix} {\mathbf{J}}_{\bm x}^{\top} {\mathbf{J}}_{\bm x} & {\mathbf{J}}_{\bm x}^{\top} {\mathbf{J}}_{\bm w} \\ {\mathbf{J}}_{\bm w}^{\top} {\mathbf{J}}_{\bm x}  & {\mathbf{J}}_{\bm w}^{\top} {\mathbf{J}}_{\bm w} \end{bmatrix} +
\begin{bmatrix} \nabla^2_{\bm x} R\left(\bm x, \bm y ^{k+1},\bm \lambda ^{k}\right) &\bm 0 \\\bm 0 &\nabla_{\bm w}^{2} S(\bm w) \end{bmatrix},
\label{eq:linear}
\end{align}
where $\bm r:=\bm r(\bm x, \bm w)$. Here, the Hessian of the regularizer $R(\bm x)$ can be exact or a linearized approximation.
Let $\delta {\bm x}$ and $\delta \bm w$ denote the update step for the image and the parameters, respectively.
Then the update steps $\delta {\bm x}$ and $\delta \bm w$ are given by the solution of the following block linear system
\begin{equation}
\label{eq:blockLAP}
\begin{bmatrix} {\mathbf{J}}_{\bm x}^{\top} {\mathbf{J}}_{\bm x} +\frac{1}{\mu} \nabla^2_{\bm x} R\left(\bm x, \bm y ^{k+1},\bm \lambda ^{k}\right) & {\mathbf{J}}_{\bm x}^{\top} {\mathbf{J}}_{\bm w} \\ {\mathbf{J}}_{\bm w}^{\top} {\mathbf{J}}_{\bm x}  & {\mathbf{J}}_{\bm w}^{\top} {\mathbf{J}}_{\bm w}+\nabla_{\bm w}^2 S(\bm w) \end{bmatrix}
\begin{bmatrix} \delta {\bm x} \\ \delta \bm w \end{bmatrix} =
-\begin{bmatrix} {\mathbf{J}}_{\bm x}^{\top} \bm r +\frac{1}{\mu} \nabla_{\bm x} R\left(\bm x, \bm y ^{k+1},\bm \lambda ^{k}\right) \\ {\mathbf{J}}_{\bm w}^{\top} \bm r +\nabla_{\bm w} S(\bm w)\end{bmatrix}.
\end{equation}

Note that omitting the regularizer terms, (\ref{eq:blockLAP}) is the normal equation corresponding to the least-squares problem
\begin{equation*}
\frac{\mu}{2} \left\| \begin{bmatrix} \mathbf{J}_{\bm x} & \mathbf{J}_{\bm w} \end{bmatrix} \begin{bmatrix} \delta \bm x \\ \delta \bm w \end{bmatrix} + \bm r\right\|^2_2
\end{equation*}
that can be obtained by the \emph{Linearize} step of LAP in \cite{Herring2017}.

LAP solves (\ref{eq:blockLAP}) by projecting the original problem onto a reduced space. In this paper, we choose to project the problem onto the image space, i.e., we eliminate the block of variables corresponding to $\bm w$. When projecting the problem onto the image space, $\delta \bm w$ can be computed by
\begin{equation}
\label{delta w1}
\delta \bm w = - \left({\mathbf{J}}_{\bm w}^{\top} {\mathbf{J}}_{\bm w}+\nabla_{\bm w}^2 S(\bm w)\right)^{-1} \left( {\mathbf{J}}_{\bm w}^{\top} {\mathbf{J}}_{\bm x} \delta \bm x + {\mathbf{J}}_{\bm w}^{\top} \bm r+\nabla_{\bm w} S(\bm w)\right).
\end{equation}
Plug \eqref{delta w1} into \eqref{eq:blockLAP} and get
\begin{equation*}
\begin{aligned}
&\left({\mathbf{J}}_{\bm x}^{\top} {\mathbf{J}}_{\bm x} + \frac{1}{\mu} \nabla^2_{\bm x} R\left(\bm x, \bm y ^{k+1},\bm \lambda ^{k}\right)\right) \delta \bm x
- {\mathbf{J}}_{\bm x}^{\top} {\mathbf{J}}_{\bm w} \left({\mathbf{J}}_{\bm w}^{\top} {\mathbf{J}}_{\bm w}+\nabla_{\bm w}^2 S(\bm w)\right)^{-1} \left( {\mathbf{J}}_{\bm w}^{\top} {\mathbf{J}}_{\bm x} \delta \bm x + {\mathbf{J}}_{\bm w}^{\top} \bm r+\nabla_{\bm w} S(\bm w)\right)\\
= &-\left({\mathbf{J}}_{\bm x}^{\top} \bm r +\frac{1}{\mu} \nabla_{\bm x} R\left(\bm x, \bm y ^{k+1},\bm \lambda ^{k}\right)\right),
\end{aligned}
\end{equation*}
which can be simplified as
\begin{equation}
\label{delta x}
 \bm { M} \delta \bm x
= \bm {b},
\end{equation}
the operator and the right hand side are given by
\begin{equation*}
\begin{aligned}
\bm { M}  :=&
{\bm J}_{\bm x}^{\top}(\bm I - {\mathbf{J}}_{\bm w} ({\mathbf{J}}_{\bm w}^{\top} {\mathbf{J}}_{\bm w} + \nabla^2_{\bm w} S(\bm w))^{-1} {\mathbf{J}}_{\bm w}^{\top}) {\mathbf{J}}_{\bm x} +\frac{1}{\mu}\nabla_{\bm x}^{2} R\left(\bm x, \bm y ^{k+1},\bm \lambda ^{k}\right),\\
\bm {b}  :=&   -{\mathbf{J}}_{\bm x}^{\top}(\bm I - {\mathbf{J}}_{\bm w} ({\mathbf{J}}_{\bm w}^{\top} {\mathbf{J}}_{\bm w} + \nabla^2_{\bm w} S(\bm w))^{-1} {\mathbf{J}}_{\bm w}^{\top}) \bm r - \frac{1}{\mu}\nabla_{\bm x} R\left(\bm x, \bm y ^{k+1},\bm \lambda ^{k}\right) \\
&+
{\mathbf{J}}_{\bm x}^{\top} {\mathbf{J}}_{\bm w} ({\mathbf{J}}_{\bm w}^{\top} {\mathbf{J}}_{\bm w} + \nabla^2_{\bm w} S(\bm w))^{-1} \nabla_{\bm w} S(\bm w).
\end{aligned}
\end{equation*}

Moreover, it is easy to see that the gradient and Hessian of  $R(\bm x, \bm y ^{k+1},\bm \lambda ^{k})$ satisfy
\begin{equation}
\begin{aligned}
\label{Rgradient}
\nabla_{\bm x} R\left(\bm x, \bm y ^{k+1},\bm \lambda ^{k}\right) &= \sum_{i=1}^{n^2}\left(D_{i}^{T} (\bm \lambda^{k})_{i}-\beta D_{i}^{T}\left(\mathbf y_{i}^{k+1}-D_{i}\bm x\right)\right)\\
&=D^{T}\bm \lambda^{k}-\beta D^{T}\bm y^{k+1}+\beta D^{T}D\bm x,
\end{aligned}
\end{equation}
and
\begin{equation}
\begin{aligned}
\label{RHessian}
\nabla^2_{\bm x} R\left(\bm x, \bm y ^{k+1},\bm \lambda ^{k}\right) &= \sum_{i=1}^{n^2}\beta D_{i}^{T}D_{i}=\beta D^{T}D.
\end{aligned}
\end{equation}
Following the above procedures, we are able to compute the unconstrained update steps $\delta \bm x$ and  $\delta \bm w$.

Next, we consider modifying the above procedures in order to handle the element-wise bound constraints on $\bm x$ and $\bm w$.
We know that a simple extension to the Gauss-Newton method does not work \cite{Kelley1999}, however, a simple correction, i.e. the projected Gauss-Newton method \cite{Haber2014} can be made convergent and it works well for many inverse problems. To this end, the variables are divided into active set variables and inactive set variables and the step $\delta \bm x$ and $\delta \bm w$ are computed through these two separate sets.
Let the feasible index set be defined as
\begin{equation*}
\mathcal{N}:= \left\{ q\in \mathbb{N} \ \bigg| \left[\begin{array}{c}{\bm x_{q}} \\ {\bm w_{q}}\end{array}\right] \geq \bm 0\right\}.
\end{equation*}
Define the active and inactive sets as
\begin{equation*}
\begin{aligned}
&\mathcal{A}:= \left\{ q\in \mathbb{N} \ \bigg| \left[\begin{array}{c}{\bm x_{q}} \\ {\bm w_{q}}\end{array}\right] = \bm 0 \right\},\\
&\mathcal{I}:= \mathcal{N}\backslash \mathcal{A}.
\end{aligned}
\end{equation*}
Then, we can divide the variables into the active set variables $\left[\begin{array}{c}{\bm x_{\mathcal{A}}} \\ {\bm w_{\mathcal{A}}}\end{array}\right]$
and the inactive set variables $\left[\begin{array}{c}{\bm x_{\mathcal{I}}} \\ {\bm w_{\mathcal{I}}}\end{array}\right]$. Denote the steps taken on $\left[\begin{array}{c}{\bm x_{\mathcal{A}}} \\ {\bm w_{\mathcal{A}}}\end{array}\right]$
by $\left[\begin{array}{c}{\delta \bm x_{\mathcal{A}}} \\ {\delta \bm w_{\mathcal{A}}}\end{array}\right]$
and the steps taken on $\left[\begin{array}{c}{\bm x_{\mathcal{I}}} \\ {\bm w_{\mathcal{I}}}\end{array}\right]$ by $\left[\begin{array}{c}{\delta \bm x_{\mathcal{I}}} \\ {\delta \bm w_{\mathcal{I}}}\end{array}\right]$, respectively.

$\delta \bm x_{\mathcal{I}}$ and $\delta \bm w_{\mathcal{I}}$ can be computed in a similar way as the unconstrained case except that the variables need to be projected onto the inactive set. That is, $\delta {\bm x}_{\mathcal{I}}$ at the current iterate $\left(\bm x,\bm w\right)$ is computed as:
\begin{equation}
\label{eq:rdp1*}
\bm {\hat M} \delta {\bm x}_{\mathcal{I}}
= \bm {\hat b},
\end{equation}
the operator and right hand side are given by
\begin{equation*}
\begin{aligned}
\bm {\hat M}  :=&
\mathbf{\hat J}_{\bm x}^{\top}(\bm I - \mathbf {\hat J}_{\bm w} (\mathbf {\hat J}_{\bm w}^{\top} \mathbf {\hat J}_{\bm w} + \nabla^2_{\bm w} {\hat S}(\bm w))^{-1} \mathbf {\hat J}_{\bm w}^{\top}) \mathbf {\hat J}_{\bm x} +\frac{1}{\mu}\nabla_{\bm x}^{2} {\hat R}\left(\bm x, \bm y ^{k+1},\bm \lambda ^{k}\right),\\
\bm {\hat b}  :=&   -\mathbf {\hat J}_{\bm x}^{\top}(\bm I - \mathbf {\hat J}_{\bm w} (\mathbf {\hat J}_{\bm w}^{\top} \mathbf {\hat J}_{\bm w} + \nabla^2_{\bm w} {\hat S}(\bm w))^{-1} \mathbf {\hat J}_{\bm w}^{\top}) \bm r - \frac{1}{\mu}\nabla_{\bm x} {\hat R}\left(\bm x, \bm y ^{k+1},\bm \lambda ^{k}\right) \\
&+
\mathbf {\hat J}_{\bm x}^{\top} \mathbf {\hat J}_{\bm w} (\mathbf {\hat J}_{\bm w}^{\top} \mathbf {\hat J}_{\bm w} + \nabla^2_{\bm w} {\hat S}(\bm w))^{-1} \nabla_{\bm w} {\hat S}(\bm w),
\end{aligned}
\end{equation*}
where $\hat{\mathbf{J}}_{\bm x}$, $\hat{\mathbf{J}}_{\bm w}$, $\nabla_{\bm x} \hat{R}$, $\nabla^2_{\bm x} {\hat R}$, $\nabla_{\bm w} {\hat S}$ and $\nabla_{\bm w}^{2} {\hat S}$ represent ${\mathbf{J}}_{\bm x}$, ${\mathbf{J}}_{\bm w}$, $\nabla_{\bm x} R$, $\nabla^2_{\bm x} {R}$, $\nabla_{\bm w} {S}$ and $\nabla_{\bm w}^{2} {S}$  restricted to the inactive set via projection, respectively.
The reduced problem \eqref{eq:rdp1*} does not need to be solved to a high accuracy. For example in~\cite{Herring2017},  a stopping tolerance of $10^{-1}$ is used to solve the reduced problem iteratively. After solving for $\delta \bm x_{\mathcal{I}}$, $\delta \bm w_{\mathcal{I}}$ can be computed by
\begin{equation}
\label{delta w}
\delta \bm w_{\mathcal{I}} = - \left({\mathbf{\hat J}}_{\bm w}^{\top} {\mathbf{\hat J}}_{\bm w}+\nabla_{\bm w}^2 {\hat S}(\bm w)\right)^{-1} \left( {\mathbf{\hat J}}_{\bm w}^{\top} {\mathbf{\hat J}}_{\bm x} \delta \bm x_{\mathcal{I}} + {\mathbf{\hat J}}_{\bm w}^{\top} \bm r+\nabla_{\bm w} {\hat S}(\bm w)\right).
\end{equation}

For the active set, $\delta \bm x_{\mathcal{A}}$ and ${\delta \bm w_{\mathcal{A}}}$ are given by a scaled projected gradient descent step
\begin{equation}
\label{equ:activeset}
\left[\begin{array}{c}{\delta \bm x_{\mathcal{A}}} \\ {\delta \bm w_{\mathcal{A}}}\end{array}\right]=-\mu\left[\begin{array}{c}{\tilde{\mathbf{J}}_{\bm x}^{\top} \bm r} \\ {\tilde{\mathbf{J}}_{\bm w}^{\top} \bm r}\end{array}\right]-\left[\begin{array}{c}{\nabla_{\bm x} \tilde{R}\left(\bm x+\delta {\bm x}_{\mathcal{A}}, \bm y ^{k+1},\bm \lambda ^{k}\right)} \\ \nabla_{\bm w} {\tilde S}(\bm w)\end{array}\right],
\end{equation}
where $\tilde{\mathbf{J}}_{\bm x}$, $\tilde{\mathbf{J}}_{\bm w}$, $\nabla_{\bm x}\tilde{R}$ and $\nabla_{\bm w} {\tilde S}$ represent the projection of ${\mathbf{J}}_{\bm x}$, ${\mathbf{J}}_{\bm w}$, $\nabla_{\bm x} {R}$ and $\nabla_{\bm w} {S}$ onto the active set, respectively.

Then $\delta \bm x$ and $\delta \bm w$ can be calculated as a scaled combination of $\delta \bm x_{\mathcal{A}}$, ${\delta \bm w_{\mathcal{A}}}$, $\delta \bm x_{\mathcal{I}}$ and ${\delta \bm w_{\mathcal{I}}}$ by
\begin{equation}
\label{equ:step}
\left[\begin{array}{c}{\delta \bm x} \\ {\delta \bm w}\end{array}\right]=\left[\begin{array}{c}{\delta \bm x_{\mathcal{I}}} \\ {\delta \bm w_{\mathcal{I}}}\end{array}\right]+\gamma\left[\begin{array}{c}{\delta \bm x_{\mathcal{A}}} \\ {\delta \bm w_{\mathcal{A}}}\end{array}\right],
\end{equation}
and the parameter $\gamma$ is selected based on the recommendation in \cite{Haber2014},
\begin{equation*}
\label{equ:parameter}
\gamma=\frac{\max \left(\left\|\delta \bm x_{\mathcal{I}}\right\|_{\infty},\left\|\delta \bm w_{\mathcal{I}}\right\|_{\infty}\right)}{\max \left(\left\|\delta \bm x_{\mathcal{A}}\right\|_{\infty},\left\|\delta \bm w_{\mathcal{A}}\right\|_{\infty}\right)}.
\end{equation*}
Finally, the projected Armijo line search is applied to find the solution. Here the modified Armijo condition is given by
\begin{equation}
\begin{aligned}
\label{equ:armijo}
&\hat{\Phi}\left(\mathbf{P}_{\mathcal{C}_{\bm x}}\left(\bm x+\eta \delta \bm x\right), \mathbf{P}_{{\mathcal{C}}_{\bm w}}\left(\bm w+\eta \delta \bm w\right), \bm y ^{k+1},\bm \lambda ^{k}\right)\\
 \leq &\hat{\Phi}\left(\bm x, \bm w, \bm y ^{k+1},\bm \lambda ^{k}\right)+c \eta \mathbf{Q}\left(\nabla_{\bm x, \bm w} \hat{\Phi}\left(\bm x, \bm w, \bm y ^{k+1},\bm \lambda ^{k}\right)\right)^{\top}\left[\begin{array}{c}{\delta \bm x} \\ {\delta \bm w}\end{array}\right],
\end{aligned}
\end{equation}
where $\mathbf{P}_{\mathcal{C}_{\bm x}}$ and $\mathbf{P}_{{\mathcal{C}}_{\bm w}}$ denotes the projections onto the feasible set for the image and parameters, respectively, $\mathbf{Q}\left(\nabla_{\bm x, \bm w} \hat{\Phi}\left(\bm x, \bm w, \bm y ^{k+1},\bm \lambda ^{k}\right)\right)$ denotes the projected gradient, $0<\eta\leq 1$ denotes the step size by backtracking, and we set $c=10^{-4}$ as suggested in \cite{Nocedal1999}. We point out that it is necessary to update the inactive set and the active set in each iteration because the projection does not prevent variables from leaving the active set and joining the inactive set. Hence, it makes sense that in Algorithm \ref{alg:ADMM-LAP}, if the termination criterion is not met in Step 3, we go back to Step 2 and get a new solution.

\subsection{ADMM-LAP method}
The proposed ADMM-LAP method for solving myopic deconvolution problems with TV regularization is summarized in Algorithm \ref{alg:ADMM-LAP}.

\begin{algorithm}[H]
\caption{ADMM-LAP method for (\ref{eq:objFctn*})}
\label{alg:ADMM-LAP}
\normalsize
\begin{algorithmic}
\STATE{\textbf{Input:}} $\left(\bm y^{0},\bm x^{0},\bm w^{0},{\bm \lambda}^{0}\right)\in \mathbb{R}^{2n^2}\times \mathbb{R}^{n^2}\times \mathbb{R}^{p}\times\mathbb{R}^{2n^2}$, the penalty parameter $\beta>0$. Let$\lbrace \epsilon_{k+1}\rbrace_{k=0}^{\infty}$ be a sequence satisfying $\lbrace\epsilon_{k+1}\rbrace_{k=0}^{\infty}\subseteq[0,+\infty)$ and $\sum_{k=0}^{\infty}\epsilon_{k+1}<\infty$. Set $k=0$.
\STATE{\textbf{Output:}} $\bm y^{k},\bm x^{k},\bm w^{k},{\bm \lambda}^{k}$.
\STATE{\textbf{Step 1}} Compute $\bm y^{k+1}$ using (\ref{ysubproblem}).
\STATE{\textbf{Step 2}} Find a minimizer of
$$\underset{\bm x \in \mathcal{C}_{\bm x}, \bm w \in {\mathcal{C}}_{\bm w}}{\min}\hat{\Phi}(\bm x, \bm w, \bm y ^{k+1},\bm \lambda ^{k}).
$$
\\
\ \ \ \ \ \ \ \ \ \ Specifically,\\
\ \ \ \ \ \ \ \ \ \ 1.Compute the step on the inactive set by the LAP method using  (\ref{eq:rdp1*}) and \\
\ \ \ \ \ \ \ \ \ \ (\ref{delta w}).
\\
\ \ \ \ \ \ \ \ \ \ 2.Compute the step on the active set by (\ref{equ:activeset}) using the projected gradient \\
\ \ \ \ \ \ \ \ \ \ descent method.\\
\ \ \ \ \ \ \ \ \ \ 3.Combine the steps using (\ref{equ:step}).\\
\ \ \ \ \ \ \ \ \ \ 4.Perform the projected Armijo line search satisfying
(\ref{equ:armijo}) to update $\bm x^{k+1}$, \\
\ \ \ \ \ \ \ \ \ \ $\bm w^{k+1}$.\\
\ \ \ \ \ \ \ \ \ \ 5.Update active and inactive sets.
\STATE{\textbf{Step 3}} If the residual $\left[\begin{array}{c}{\eta_{\bm x}^{k+1}} \\ {\eta_{\bm w}^{k+1}}\end{array}\right]
:= \nabla_{(\bm x, \bm w)} \hat{\Phi}(\bm x, \bm w, \bm y ^{k+1},\bm \lambda ^{k})$ satisfies
$\left\|\left[\begin{array}{c}{\eta_{\bm x}^{k+1}} \\ {\eta_{\bm w}^{k+1}}\end{array}\right]\right\|_{2}\leq \epsilon_{k+1},$
 stop and go to Step 4; else, repeat Step 2.
\STATE{\textbf{Step 4}} Compute $\bm \lambda^{k+1}$ using (\ref{eq:lambda}).
\STATE{\textbf{Step 5}} If a termination criteria is met, stop; else, set $k:=k+1$ and go to Step 1.
\end{algorithmic}
\end{algorithm}

Notice that ADMM-LAP is an inexact ADMM, where the $\bm y$-subproblems are exactly solved while the $(\bm x, \bm w)$-subproblems are inexactly solved. Inspired by the recent results in \cite{Mei2018, Wang2019}, we prove that the ADMM-LAP algorithm converges subsequently to a stationary point.

\begin{theorem}
\label{convergence theorem}
Let $(\bm y^0, \bm x^0, \bm w^0, \bm \lambda^0)$ be any initial point and $\{(\bm y^k, \bm x^k, \bm w^k, \bm \lambda^k)\}$ be the sequence of iterates generated by Algorithm \ref{alg:ADMM-LAP}.
Then if $\beta>\frac{1}{a_{1}}\lambda_{\rm max}(A(\bm w^{k})^{T}A(\bm w^{k}))$ and $\beta$ satisfies $\beta^{2}a_{1}-L\beta-4a_{2}C_{0}>0$, where $C_{0}$, $a_{1}$, $a_{2}$, $L$ are constants specified in Lemma \ref{lemma1} and Lemma \ref{lemma2}, Algorithm \ref{alg:ADMM-LAP} converges subsequently, i.e., it generates a sequence that has a convergent subsequence, whose limit $(\bm y^{\ast}, \bm x^{\ast}, \bm w^{\ast}, \bm \lambda^{\ast})$ is a stationary point of $\mathcal{L}_{\beta}$. That is, $0 \in \partial \mathcal{L}_{\beta}(\bm y^{\ast}, \bm x^{\ast}, \bm w^{\ast}, \bm \lambda^{\ast})$.
\end{theorem}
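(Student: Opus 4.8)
The plan is to follow the standard three-stage argument for convergence of nonconvex ADMM: (i) show that the augmented Lagrangian $\mathcal{L}_{\beta}$ is, up to a summable perturbation, nonincreasing along the iterates and bounded below; (ii) deduce that the successive differences $\|\bm y^{k+1}-\bm y^{k}\|$, $\|\bm x^{k+1}-\bm x^{k}\|$, $\|\bm w^{k+1}-\bm w^{k}\|$, $\|\bm\lambda^{k+1}-\bm\lambda^{k}\|$ are square-summable and hence vanish, and that the whole sequence is bounded; (iii) pass to the limit along a convergent subsequence in the (inexact) optimality conditions of the three subproblems to obtain $0\in\partial\mathcal{L}_{\beta}(\bm y^{\ast},\bm x^{\ast},\bm w^{\ast},\bm\lambda^{\ast})$. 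The two hypotheses on $\beta$ are exactly what makes stage (i) go through, and the summability $\sum_{k}\epsilon_{k+1}<\infty$ controls the inexactness of the $(\bm x,\bm w)$-solve in stages (i) and (iii).

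First I would set up two auxiliary estimates (which I would package as Lemma \ref{lemma1} and Lemma \ref{lemma2}). Using the optimality condition of the exactly-solved $\bm y$-subproblem \eqref{ysubproblem} --- there is $\bm\zeta^{k+1}\in\partial\bigl(\sum_{i}\|\mathbf y_{i}\|_{2}\bigr)(\bm y^{k+1})$ with $\bm\zeta^{k+1}=\bm\lambda^{k}-\beta(\bm y^{k+1}-D\bm x^{k})$ --- together with the multiplier update \eqref{eq:lambda}, one gets $\bm\lambda^{k+1}=\bm\zeta^{k+1}+\beta D(\bm x^{k+1}-\bm x^{k})$; combined with the inexact $\bm x$-stationarity of \eqref{xw:subproblem}, whose $\bm\lambda$-dependence is through $D^{\top}\bm\lambda^{k+1}$ (indeed, by \eqref{Rgradient} and \eqref{eq:lambda} one has $\nabla_{\bm x}R(\bm x^{k+1},\bm y^{k+1},\bm\lambda^{k})=D^{\top}\bm\lambda^{k+1}$), this bounds $\|\bm\lambda^{k+1}-\bm\lambda^{k}\|^{2}$ by a multiple of $\|\bm x^{k+1}-\bm x^{k}\|^{2}+\|\bm x^{k}-\bm x^{k-1}\|^{2}+\|\bm w^{k+1}-\bm w^{k}\|^{2}+\epsilon_{k+1}^{2}$, with constants $C_{0}$, $a_{2}$. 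For the descent estimate, I would sum three contributions to $\mathcal{L}_{\beta}(\bm y^{k+1},\bm x^{k+1},\bm w^{k+1},\bm\lambda^{k+1})-\mathcal{L}_{\beta}(\bm y^{k},\bm x^{k},\bm w^{k},\bm\lambda^{k})$: the $\bm y$-update contributes at most $-\tfrac{\beta}{2}\|\bm y^{k+1}-\bm y^{k}\|^{2}$ by $\beta$-strong convexity in $\bm y$; the $(\bm x,\bm w)$-update contributes at most $-\tfrac{a_{1}}{2}\bigl(\|\bm x^{k+1}-\bm x^{k}\|^{2}+\|\bm w^{k+1}-\bm w^{k}\|^{2}\bigr)$ plus an inexactness term of order $\epsilon_{k+1}^{2}$, where $a_{1}$ encodes the positive curvature supplied by $\mu A(\bm w^{k})^{\top}A(\bm w^{k})+\beta D^{\top}D$ and $\xi\bm e\bm e^{\top}$ and requires $\beta>\frac{1}{a_{1}}\lambda_{\max}(A(\bm w^{k})^{\top}A(\bm w^{k}))$; and the dual update contributes exactly $+\tfrac{1}{\beta}\|\bm\lambda^{k+1}-\bm\lambda^{k}\|^{2}$. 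Substituting the first lemma's bound and using $\beta^{2}a_{1}-L\beta-4a_{2}C_{0}>0$ to absorb the shifted-index term $\|\bm x^{k}-\bm x^{k-1}\|^{2}$ after telescoping, the net change is $\le-c_{1}\|\bm y^{k+1}-\bm y^{k}\|^{2}-c_{2}\bigl(\|\bm x^{k+1}-\bm x^{k}\|^{2}+\|\bm w^{k+1}-\bm w^{k}\|^{2}\bigr)+c_{3}\epsilon_{k+1}^{2}$ with $c_{1},c_{2}>0$.

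Next, for lower boundedness I would substitute $\bm\lambda^{k+1}=\bm\zeta^{k+1}+\beta D(\bm x^{k+1}-\bm x^{k})$ into $\mathcal{L}_{\beta}$ and complete the square in the $\tfrac{\beta}{2}\|\bm y-D\bm x\|^{2}$ term, which shows $\mathcal{L}_{\beta}(\bm y^{k+1},\bm x^{k+1},\bm w^{k+1},\bm\lambda^{k+1})$ dominates the objective of \eqref{eq:objFctn1*} (which is bounded below and coercive on $\mathcal{C}_{\bm x}\times\mathcal{C}_{\bm w}$) minus a vanishing quantity, hence is bounded below. Combined with the descent inequality and $\sum_{k}\epsilon_{k+1}^{2}\le\sum_{k}\epsilon_{k+1}<\infty$, the sequence $\{\mathcal{L}_{\beta}^{k}\}$ converges and telescoping yields $\sum_{k}\bigl(\|\bm y^{k+1}-\bm y^{k}\|^{2}+\|\bm x^{k+1}-\bm x^{k}\|^{2}+\|\bm w^{k+1}-\bm w^{k}\|^{2}\bigr)<\infty$; in particular these successive differences tend to $\bm 0$, and by Lemma \ref{lemma1} so does $\|\bm\lambda^{k+1}-\bm\lambda^{k}\|$, i.e.\ the constraint residual $\bm y^{k+1}-D\bm x^{k+1}=-\tfrac{1}{\beta}(\bm\lambda^{k+1}-\bm\lambda^{k})\to\bm 0$. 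Boundedness of $\{(\bm x^{k},\bm w^{k})\}$ follows from coercivity and $\mathcal{L}_{\beta}^{k}\le\mathcal{L}_{\beta}^{0}+c_{3}\sum_{k}\epsilon_{k+1}^{2}$, and then $\{\bm y^{k}\}$ and $\{\bm\lambda^{k}\}$ are bounded too (via $\bm y^{k}-D\bm x^{k}\to\bm 0$ and $\bm\lambda^{k}=\bm\zeta^{k}+\beta D(\bm x^{k}-\bm x^{k-1})$ with $\bm\zeta^{k}$ in a fixed compact set). By Bolzano--Weierstrass extract $(\bm y^{k_{j}},\bm x^{k_{j}},\bm w^{k_{j}},\bm\lambda^{k_{j}})\to(\bm y^{\ast},\bm x^{\ast},\bm w^{\ast},\bm\lambda^{\ast})$; since the successive differences vanish, the shifted iterates with index $k_{j}+1$ share the same limit. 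Taking $j\to\infty$ in the $\bm y$-optimality, in the inexact $(\bm x,\bm w)$-stationarity $\|[\eta_{\bm x}^{k_{j}+1};\eta_{\bm w}^{k_{j}+1}]\|\le\epsilon_{k_{j}+1}\to0$ (Step 3 of Algorithm \ref{alg:ADMM-LAP}), and in the primal feasibility, and using continuity of the smooth parts together with outer semicontinuity of $\partial\bigl(\sum_{i}\|\mathbf y_{i}\|_{2}\bigr)$ and of the normal cones $N_{\mathcal{C}_{\bm x}}$, $N_{\mathcal{C}_{\bm w}}$, the three limiting inclusions combine to $0\in\partial\mathcal{L}_{\beta}(\bm y^{\ast},\bm x^{\ast},\bm w^{\ast},\bm\lambda^{\ast})$.

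I expect the main obstacle to be the sufficient-decrease inequality for the nonconvex, inexactly-solved $(\bm x,\bm w)$-subproblem: one must extract a genuine quadratic decrease $-\tfrac{a_{1}}{2}\bigl(\|\bm x^{k+1}-\bm x^{k}\|^{2}+\|\bm w^{k+1}-\bm w^{k}\|^{2}\bigr)$ from a step produced by projected Gauss--Newton plus Armijo backtracking rather than by exact minimization --- hence the reliance on the stopping residual $\epsilon_{k+1}$ and its summability --- and, because $\bm x$ and $\bm w$ are bilinearly coupled in the fidelity term, show that the curvature contributed by $\beta D^{\top}D$ and $\xi\bm e\bm e^{\top}$, under the spectral lower bound on $\beta$, still yields a net strictly negative change of $\mathcal{L}_{\beta}$ once the dual ascent $\tfrac{1}{\beta}\|\bm\lambda^{k+1}-\bm\lambda^{k}\|^{2}$ is added back in; this is precisely what the two quantitative hypotheses $\beta>\frac{1}{a_{1}}\lambda_{\max}(A(\bm w^{k})^{\top}A(\bm w^{k}))$ and $\beta^{2}a_{1}-L\beta-4a_{2}C_{0}>0$ encode. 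Verifying that the first bound can be taken uniformly along the sequence also needs the a priori boundedness of $\{\bm w^{k}\}$, which is why the nonnegativity constraint and the penalty $S(\bm w)$ are built into the model \eqref{eq:objFctn1*}.
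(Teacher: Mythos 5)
Your proposal follows essentially the same route as the paper's proof: a perturbed sufficient-decrease estimate for $\mathcal{L}_{\beta}$ in which the dual change is controlled by the primal change through the (inexact) first-order conditions of the $(\bm x,\bm w)$-subproblem and the multiplier update (the paper's Lemma \ref{lemma1} and Lemma \ref{lemma2}, with the same role for the two hypotheses on $\beta$ and for the summable residuals $\epsilon_{k+1}$), followed by lower boundedness, boundedness of the iterates, vanishing successive differences, and a subsequential limit. The only minor difference is the final step, where you pass to the limit directly in the three optimality conditions using outer semicontinuity of the subdifferentials and normal cones, whereas the paper instead constructs an explicit subgradient $p^{k+1}\in\partial\mathcal{L}_{\beta}$ at the iterates, bounds it by the primal change plus the residual difference (its Lemma \ref{lemma3}), and invokes closedness of the general subgradient; the substance is the same.
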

To prove Theorem \ref{convergence theorem}, we define the following functions:
\begin{equation*}
F(\bm y)=\sum_{i=1}^{n^2}\|\mathbf y_{i}\|_{2},
\end{equation*}
\begin{equation*}
G(\bm x, \bm w)=\frac{\mu}{2} \| A(\bm w) \bm x - \bm d \|_2^2 +S(\bm w)+\delta_{\mathcal{C}_{\bm x}}(\bm x)+\delta_{{\mathcal{C}}_{\bm  w}}(\bm w),
\end{equation*}
\begin{equation*}
g(\bm x, \bm w)=\frac{\mu}{2} \| A(\bm w) \bm x - \bm d \|_2^2+S(\bm w).
\end{equation*}
\begin{lemma}
\label{lemma1}
The iterates of Algorithm 1 satisfy:\\
1. $\nabla_{\bm x}g(\bm x^{k+1}, \bm w^{k+1})=-\sum_{i=1}^{n^2}D_{i}^{T}{\bm \lambda_{i}^{k+1}}+\eta^{k+1}_{\bm x}$.\\
2. $\left\|\sum_{i=1}^{n^2}D_{i}^{T}{\bm \lambda_{i}^{k+1}}-\sum_{i=1}^{n^2}D_{i}^{T}{\bm \lambda_{i}^{k}}\right\|_{2}\leq C_{0}\left\|\bigg[\begin{array}{c}{\bm x^{k+1}-\bm x^{k}} \\ {\bm w^{k+1}-\bm w^{k}}\end{array}\bigg]\right\|_{2}+\|\eta^{k+1}_{\bm x}-\eta^{k}_{\bm x}\|_{2}$,
where $C_{0}$ is a constant.
\end{lemma}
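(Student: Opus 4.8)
The plan is to read off both statements from the first-order information that Step~2 of Algorithm~\ref{alg:ADMM-LAP} produces, combined with the multiplier update \eqref{eq:lambda}; the only genuinely analytic point is the Lipschitz estimate hidden in the constant $C_0$. For part~1, I would start from the definition of the residual checked in Step~3, namely $\eta_{\bm x}^{k+1}=\nabla_{\bm x}\hat{\Phi}(\bm x^{k+1},\bm w^{k+1},\bm y^{k+1},\bm\lambda^{k})$, and split the gradient according to $\hat{\Phi}=g+R$; since $S(\bm w)$ does not depend on $\bm x$ this gives $\eta_{\bm x}^{k+1}=\nabla_{\bm x}g(\bm x^{k+1},\bm w^{k+1})+\nabla_{\bm x}R(\bm x^{k+1},\bm y^{k+1},\bm\lambda^{k})$. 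Evaluating \eqref{Rgradient} at $\bm x=\bm x^{k+1}$ with multiplier $\bm\lambda^{k}$ yields $\nabla_{\bm x}R=\sum_{i}D_i^{\top}\big((\bm\lambda^{k})_i-\beta(\mathbf y_i^{k+1}-D_i\bm x^{k+1})\big)$, and the componentwise form of \eqref{eq:lambda}, $(\bm\lambda^{k+1})_i=(\bm\lambda^{k})_i-\beta(\mathbf y_i^{k+1}-D_i\bm x^{k+1})$, collapses this sum to $\sum_i D_i^{\top}(\bm\lambda^{k+1})_i$. Rearranging the resulting identity $\eta_{\bm x}^{k+1}=\nabla_{\bm x}g(\bm x^{k+1},\bm w^{k+1})+\sum_i D_i^{\top}(\bm\lambda^{k+1})_i$ gives part~1; this step is pure bookkeeping.

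For part~2, I would apply part~1 at indices $k+1$ and $k$ (valid for $k\ge 1$) and subtract, obtaining $\sum_i D_i^{\top}(\bm\lambda^{k+1})_i-\sum_i D_i^{\top}(\bm\lambda^{k})_i=-\big(\nabla_{\bm x}g(\bm x^{k+1},\bm w^{k+1})-\nabla_{\bm x}g(\bm x^{k},\bm w^{k})\big)+\big(\eta_{\bm x}^{k+1}-\eta_{\bm x}^{k}\big)$. The triangle inequality then reduces the claim to the bound $\|\nabla_{\bm x}g(\bm x^{k+1},\bm w^{k+1})-\nabla_{\bm x}g(\bm x^{k},\bm w^{k})\|_2\le C_0\big\|[\bm x^{k+1}-\bm x^{k};\,\bm w^{k+1}-\bm w^{k}]\big\|_2$, since the $\eta_{\bm x}$ difference enters with coefficient one. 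Because $\nabla_{\bm x}g(\bm x,\bm w)=\mu A(\bm w)^{\top}(A(\bm w)\bm x-\bm d)$ and $A(\bm w)=\sum_j w_j A_j$ is linear in $\bm w$ (so $A(\bm w)-A(\bm w')=A(\bm w-\bm w')$), I would expand the difference by repeated add-and-subtract into a sum of terms each of which carries a factor $\|\bm x^{k+1}-\bm x^{k}\|$ or $\|\bm w^{k+1}-\bm w^{k}\|$ multiplied by quantities built from $\|\bm x^{k}\|$, $\|\bm w^{k}\|$, $\|\bm w^{k+1}\|$, $\max_j\|A_j\|$ and $\|\bm d\|$; grouping the factors produces an explicit $C_0$.

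The main obstacle is turning that last expansion into a genuine \emph{constant}: $\nabla_{\bm x}g$ is a polynomial map in $(\bm x,\bm w)$ of degree three, hence not globally Lipschitz, so a uniform $C_0$ requires the iterates $\{(\bm x^{k},\bm w^{k})\}$ to lie in a fixed bounded set. I would dispose of this by invoking boundedness of the sequence --- which follows from the monotone-decrease and coercivity properties of $\mathcal{L}_{\beta}$ that underlie the convergence analysis (cf.\ Lemma~\ref{lemma2}) --- and then defining $C_0$ as the Lipschitz modulus of $\nabla_{\bm x}g$ on that bounded set, which is finite. Everything else in the proof is a direct manipulation of the definitions of $\hat{\Phi}$, $R$, $g$ and the updates in Algorithm~\ref{alg:ADMM-LAP}.
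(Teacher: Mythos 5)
Your proposal follows essentially the same route as the paper's proof. Part 1 is obtained the same way: the (inexact) first-order condition of the $(\bm x,\bm w)$-subproblem --- which, given the definition of the residual in Step 3, is just $\eta_{\bm x}^{k+1}=\nabla_{\bm x}g+\nabla_{\bm x}R$ at the new iterate --- combined with (\ref{Rgradient}) and the multiplier update (\ref{eq:lambda}) collapses $\nabla_{\bm x}R$ to $\sum_i D_i^{T}(\bm\lambda^{k+1})_i$; the paper additionally argues that the subgradient of the indicator term vanishes, a step your reading via the residual definition sidesteps but which leads to the identical identity. Part 2 also matches: subtract the identities at $k$ and $k+1$, apply the triangle inequality, and bound $\|\nabla_{\bm x}g(\bm x^{k+1},\bm w^{k+1})-\nabla_{\bm x}g(\bm x^{k},\bm w^{k})\|_2$ by passing through the intermediate point $(\bm x^{k},\bm w^{k+1})$ and exploiting the linearity of $A(\cdot)$ in $\bm w$; the paper writes this out only for $p=2$, and your general-$p$ add-and-subtract with $\max_j\|A_j\|$ is the same computation in slightly greater generality. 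The one place you diverge is the justification that $C_0$ is a constant: you rightly note that $\nabla_{\bm x}g$ is not globally Lipschitz, but the fix you propose --- invoking boundedness of the iterates via the descent and coercivity of $\mathcal{L}_{\beta}$, i.e.\ Lemma \ref{lemma2} --- is circular in the paper's logical order, since Lemma \ref{lemma2} is proved using part 2 of Lemma \ref{lemma1} to control $\|\bm\lambda^{k+1}-\bm\lambda^{k}\|$. The paper itself does not resolve this point: its $c_1$, $c_2$ and $C_0$ are built from quantities such as $\|A(\bm w^{k})\bm x^{k}\|_2$ and $\lambda_{\rm max}(A(\bm w^{k+1})^{T}A(\bm w^{k+1}))$ evaluated at the current iterates and are simply called constants, so $C_0$ is effectively iterate-dependent there. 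Your argument is therefore sound provided you either let $C_0$ depend on the iterates as the paper implicitly does, or establish boundedness by an argument that does not rely on Lemma \ref{lemma2}.
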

\begin{proof}
First, notice that for $(\bm x^{k+1}, \bm w^{k+1})$ generated by ADMM, we have $\bm x^{k+1}= [x^{k+1}_{1}; x^{k+1}_{2}; \dots; x^{k+1}_{n^2}] \in {\mathcal{C}_{\bm x}}$, $\bm w^{k+1}= [w^{k+1}_{1}; w^{k+1}_{2}; \dots; w^{k+1}_{p}]  \in {{\mathcal{C}}_{\bm w}}$. From the definition of the general subgradient, for all $\bm v_{\bm x} \in \partial \delta_{{\mathcal{C}_{\bm x}}}(\bm x^{k+1})$,
\begin{equation*}
\delta_{{\mathcal{C}_{\bm x}}}(\bm x)-\delta_{{\mathcal{C}_{\bm x}}}(\bm x^{k+1})\geq \bm v_{\bm x}\cdot (\bm x-\bm x^{k+1}), \ \ \ \forall \bm x \in \mathbb{R}^{n^2}.
\end{equation*}
For any $\bm x= [x_{1}; x_{2}; \dots; x_{n^2}]$, where $x_{i}\geq x_{i}^{k+1}\geq 0, i=1,2,\dots, n^2$, we have
$\bm 0 \geq \bm v_{\bm x}\cdot(\bm x-\bm x^{k+1})$.
For any $\bm x= [x_{1}; x_{2}; \dots;  x_{n^2}]$, where $0\leq x_{i}\leq x_{i}^{k+1}, i=1,2,\dots, n^2$, we have $\bm 0 \leq \bm v_{\bm x}\cdot(\bm x-\bm x^{k+1}).$
Thus, we have $\bm v_{\bm x} = \bm 0$. Similarly, $\bm v_{\bm w} = \bm 0$.
By the first-order optimality condition at $(\bm x^{k+1}, \bm w^{k+1})$,
\begin{equation*}
\begin{aligned}
\begin{bmatrix}
\nabla_{\bm x} g(\bm x^{k+1}, \bm w^{k+1})\\
\nabla_{\bm w}g (\bm x^{k+1}, \bm w^{k+1})
\end{bmatrix}
+
\begin{bmatrix}
\sum_{i=1}^{n^2}D_{i}^{T}{\bm \lambda_{i}^{k}}\\
\bm 0
\end{bmatrix}
-
\begin{bmatrix}
\sum_{i=1}^{n^2}\beta D_{i}^{T}(\bm y_{i}^{k+1}-D_{i} \bm x)   \\
\bm 0
\end{bmatrix}
-
\begin{bmatrix}
\eta^{k+1}_{\bm x}   \\
\eta^{k+1}_{\bm w}
\end{bmatrix}
\in \partial \delta_{{\mathcal{C}_{\bm x}}\times {{\mathcal{C}}_{\bm w}}}(\bm x^{k+1}, \bm w^{k+1}).
\end{aligned}
\end{equation*}
Hence,
\begin{equation*}
{\nabla_{\bm x}} g(\bm x^{k+1}, \bm w^{k+1})+\sum_{i=1}^{n^2}\left(D_{i}^{T}{\bm \lambda_{i}^{k}}-\beta D_{i}^{T}(\bm y_{i}^{k+1}-D_{i} \bm x)\right)-\eta^{k+1}_{\bm x} = \bm 0.
\end{equation*}
Then by the update of the multiplier $\bm \lambda^{k+1}=\bm \lambda^{k}-\beta \left(\bm y^{k+1}-D\bm x^{k+1}\right)$, we obtain
\begin{equation}
\label{gradient g}
\nabla_{\bm x} g(\bm x^{k+1}, \bm w^{k+1})=-\sum_{i=1}^{n^2}D_{i}^{T}{\bm \lambda_{i}^{k+1}}+\eta^{k+1}_{\bm x}.
\end{equation}
We make use of the decomposition
\begin{equation}
\begin{aligned}
\label{decomposition}
&\|\nabla_{\bm x} g(\bm x^{k}, \bm w^{k})-\nabla_{\bm x} g(\bm x^{k+1}, \bm w^{k+1})\|_{2}\\
\leq &\|\nabla_{\bm x} g(\bm x^{k}, \bm w^{k})-\nabla_{\bm x} g(\bm x^{k}, \bm w^{k+1})\|_{2}+\|\nabla_{\bm x} g(\bm x^{k}, \bm w^{k+1})-\nabla_{\bm x} g(\bm x^{k+1}, \bm w^{k+1})\|_{2}.\\
\end{aligned}
\end{equation}
For the term $\|\nabla_{\bm x} g(\bm x^{k}, \bm w^{k})-\nabla_{\bm x} g(\bm x^{k}, \bm w^{k+1})\|_{2}$, we have the estimate
\begin{equation}
\begin{aligned}
\label{decomposition1}
&\|\nabla_{\bm x} g(\bm x^{k}, \bm w^{k})-\nabla_{\bm x} g(\bm x^{k}, \bm w^{k+1})\|_{2}\\
=& \|A(\bm w^{k})^{T}A(\bm w^{k})\bm x^{k}-A(\bm w^{k})^{T}\bm d-A(\bm w^{k+1})^{T}A(\bm w^{k+1})\bm x^{k}+A(\bm w^{k+1})^{T}\bm d\|_{2}\\
\leq &\|A(\bm w^{k})^{T}A(\bm w^{k})\bm x^{k}-A(\bm w^{k+1})^{T}A(\bm w^{k})\bm x^{k}\|_{2}+\|A(\bm w^{k+1})^{T}A(\bm w^{k})\bm x^{k}-A(\bm w^{k+1})^{T}A(\bm w^{k+1})\bm x^{k} \|_{2}\\
&+\|A(\bm w^{k})^{T}\bm d-A(\bm w^{k+1})^{T}\bm d\|_{2}.
\end{aligned}
\end{equation}
Then, we estimate the terms in (\ref{decomposition1}) separately with the 2-weight case, i.e., $\bm w = [w; 1-w]$. Cases with $p\geq 3$ can be considered similarly, here we omit it.
First,
\begin{equation}
\begin{aligned}
&\|A(\bm w^{k})^{T}A(\bm w^{k})\bm x^{k}-A(\bm w^{k+1})^{T}A(\bm w^{k})\bm x^{k}\|_{2}\\
\leq &\|A(\bm w^{k})^{T}-A(\bm w^{k+1})^{T}\|_{2}\cdot  \|A(\bm w^{k})\bm x^{k}\|_{2}\\
\leq &\|w^{k}A_{1}^{T}+(1-w^{k})A_{2}^{T}-w^{k+1}A_{1}^{T}-(1-w^{k+1})A_{2}^{T}\|_{2}\cdot  \|A(\bm w^{k})\bm x^{k}\|_{2}\\
\leq & \|A_{1}^{T}-A_{2}^{T}\|_{2}\cdot  \|A(\bm w^{k})\bm x^{k}\|_{2}\cdot\|w^{k}-w^{k+1}\|_{2},\\
= &\lambda_{\rm max}(A_{1}^{T}-A_{2}^{T})\cdot  \|A(\bm w^{k})\bm x^{k}\|_{2}\cdot\|w^{k}-w^{k+1}\|_{2},\\
= &c_{1}\|w^{k}-w^{k+1}\|_{2},
\end{aligned}
\end{equation}
where $c_{1}:=\lambda_{\rm max}(A_{1}^{T}-A_{2}^{T})\cdot \|A(\bm w^{k})\bm x^{k}\|_{2}$ is a constant.
Similarly,
\begin{equation}
\begin{aligned}
&\|A(\bm w^{k+1})^{T}A(\bm w^{k})\bm x^{k}-A(\bm w^{k+1})^{T}A(\bm w^{k+1})\bm x^{k} \|_{2}\\
\leq &\lambda_{\rm max}(A_{1}-A_{2})\cdot \|A(\bm w^{k+1})^{T}\|_{2}\cdot \|\bm x^{k}\|_{2}\cdot\|w^{k}-w^{k+1}\|_{2},\\
= &c_{2}\|w^{k}-w^{k+1}\|_{2},
\end{aligned}
\end{equation}
where $c_{2}:=\lambda_{\rm max}(A_{1}-A_{2})\cdot \|A(\bm w^{k+1})^{T}\|_{2}\cdot \|\bm x^{k}\|_{2}$ is a constant.

Moreover,
\begin{equation}
\begin{aligned}
&\|A(\bm w^{k})^{T}\bm d-A(\bm w^{k+1})^{T}\bm d\|_{2}\\
\leq &\|(A_{1}-A_{2})^{T}\bm d\|_{2}\cdot \|w^{k}-w^{k+1}\|_{2}\\
= &c_{3}\|w^{k}-w^{k+1}\|_{2},
\end{aligned}
\end{equation}
where $c_{3}:=\|(A_{1}-A_{2})^{T}\bm d\|_{2}$ is a constant.

For the term $\|\nabla_{\bm x} g(\bm x^{k}, \bm w^{k+1})-\nabla_{\bm x} g(\bm x^{k+1}, \bm w^{k+1})\|_{2}$, we have
\begin{equation}
\begin{aligned}
\label{decomposition2}
&\|\nabla_{\bm x} g(\bm x^{k}, \bm w^{k+1})-\nabla_{\bm x} g(\bm x^{k+1}, \bm w^{k+1})\|_{2}\\
= &\mu\|A(\bm w^{k+1})^{T}A(\bm w^{k+1})(\bm x^{k}-\bm x^{k+1})\|_{2} \\
\leq &\mu\lambda_{\rm max}(A(\bm w^{k+1})^{T}A(\bm w^{k+1}))\cdot\|\bm x^{k}-\bm x^{k+1}\|_{2},
\end{aligned}
\end{equation}
Then we know from (\ref{gradient g})-(\ref{decomposition2}) that there exists a constant $C_{0}$ such that
\begin{equation*}
\begin{aligned}
&\left\|\sum_{i=1}^{n^2}D_{i}^{T}{\bm \lambda_{i}^{k+1}}-\sum_{i=1}^{n^2}D_{i}^{T}{\bm \lambda_{i}^{k}}\right\|_{2}-\|\eta^{k}_{\bm x}-\eta^{k+1}_{\bm x} \|_{2}\\
\leq &\left\|-\sum_{i=1}^{n^2}D_{i}^{T}{\bm \lambda_{i}^{k+1}}+\eta^{k+1}_{\bm x} +\sum_{i=1}^{n^2}D_{i}^{T}{\bm \lambda_{i}^{k}}-\eta^{k}_{\bm x} \right\|_{2}\\
=&\|\nabla_{\bm x} g(\bm x^{k+1}, \bm w^{k+1})-\nabla_{\bm x} g(\bm x^{k}, \bm w^{k})\|_{2}\\
\leq &C_{0}\left\|\bigg[\begin{array}{c}{\bm x^{k+1}-\bm x^{k}} \\ {\bm w^{k+1}-\bm w^{k}}\end{array}\bigg]\right\|_{2},
\end{aligned}
\end{equation*}
where $C_{0}:=\sqrt{2}{\rm max}\{c_{1}+c_{2}+c_{3}, \mu \lambda_{\rm max}(A(\bm w^{k+1})^{T}A(\bm w^{k+1}))\}$.
Hence, we have
\begin{equation*}
\left\|\sum_{i=1}^{n^2}D_{i}^{T}{\bm \lambda_{i}^{k+1}}-\sum_{i=1}^{n^2}D_{i}^{T}{\bm \lambda_{i}^{k}}\right\|\leq C_{0}\left\|\bigg[\begin{array}{c}{\bm x^{k+1}-\bm x^{k}} \\ {\bm w^{k+1}-\bm w^{k}}\end{array}\bigg]\right\|_{2}^{2}+\|\eta^{k+1}_{\bm x}-\eta^{k}_{\bm x}\|_{2}.
\end{equation*}
\end{proof}

\begin{lemma}
\label{lemma2}
Let $\{(\bm y^k, \bm x^k, \bm w^k, \bm \lambda^k)\}$ be the sequence of iterates generated by Algorithm \ref{alg:ADMM-LAP}. If $\beta>\frac{1}{a_{1}}\lambda_{\rm max}(A(\bm w^{k})^{T}A(\bm w^{k}))$ and $\beta$ satisfies $\beta^{2}a_{1}-L\beta-4a_{2}C_{0}>0$, where $a_{1}$, $a_{2}$ are constants depending on $D$, $L$ denotes the Lipschitz constant of $-g(\bm x^{k+1}, \bm w^{k})$, then $\{(\bm y^k, \bm x^k, \bm w^k, \bm \lambda^k)\}$ satisfies:\\
1. $\mathcal{L}_{\beta}(\bm y^k, \bm x^k, \bm w^k, \bm \lambda^k)$ is lower bounded and there is a constant $C_{1}>0$ such that for all sufficiently large k, we have
\begin{equation*}
\begin{aligned}
&\mathcal{L}_{\beta}(\bm y^k, \bm x^k, \bm w^k, \bm \lambda^k)-\mathcal{L}_{\beta}(\bm y^{k+1}, \bm x^{k+1}, \bm w^{k+1}, \bm \lambda^{k+1}) \\
\geq &C_{1}\left(\sum_{i=1}^{n^2}\|\mathbf y_{i}^{k}-\mathbf y_{i}^{k+1}\|_{2}^{2}+\left\|\bigg[\begin{array}{c}{\bm x^{k+1}-\bm x^{k}} \\ {\bm w^{k+1}-\bm w^{k}}\end{array}\bigg]\right\|_{2}^{2}\right)- \frac{2}{\beta}\|\eta^{k+1}_{\bm x}-\eta^{k}_{\bm x}\|_{2}^{2}.
\end{aligned}
\end{equation*}
\\
2. $\{(\bm y^k, \bm x^k, \bm w^k, \bm \lambda^k)\}$ is bounded.
\end{lemma}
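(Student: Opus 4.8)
The plan is to establish, in order, the sufficient--decrease inequality, then the lower bound, and finally the boundedness of the iterates, following the nonconvex ADMM template of \cite{Wang2019, Mei2018} but adapted to the inexactly solved, tightly coupled $(\bm x,\bm w)$-block. Writing $\mathcal{L}_\beta^k := \mathcal{L}_\beta(\bm y^k,\bm x^k,\bm w^k,\bm \lambda^k)$, I would telescope one ADMM sweep according to the order in which the blocks are updated,
\[
\mathcal{L}_\beta^k-\mathcal{L}_\beta^{k+1}=\Delta_{\bm y}+\Delta_{(\bm x,\bm w)}+\Delta_{\bm \lambda},
\]
where $\Delta_{\bm y}$, $\Delta_{(\bm x,\bm w)}$, $\Delta_{\bm \lambda}$ denote the change in $\mathcal{L}_\beta$ produced by the $\bm y$-update, the $(\bm x,\bm w)$-update, and the multiplier update respectively, and then estimate each piece.

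For $\Delta_{\bm y}$, I would use that the $\bm y$-subproblem objective is the sum of the convex $F$ and the quadratic $\tfrac{\beta}{2}\|\bm y-D\bm x^k\|_2^2$, hence $\beta$-strongly convex; since $\bm y^{k+1}$ is its exact minimizer (\ref{ysubproblem}), strong convexity gives $\Delta_{\bm y}\ge \tfrac{\beta}{2}\sum_{i}\|\mathbf y_i^k-\mathbf y_i^{k+1}\|_2^2$. For $\Delta_{(\bm x,\bm w)}$, I would combine two facts: first, after the LAP inner iteration the stopping rule (Step 3) guarantees $\nabla_{(\bm x,\bm w)}\hat{\Phi}(\bm x^{k+1},\bm w^{k+1},\bm y^{k+1},\bm \lambda^k)=[\eta_{\bm x}^{k+1};\eta_{\bm w}^{k+1}]$; second, by (\ref{RHessian}) the block objective carries curvature $\beta D^\top D$ in $\bm x$, which, once $\beta a_1>\lambda_{\rm max}(A(\bm w^k)^\top A(\bm w^k))$, dominates the fidelity curvature (bounded in terms of $L$) and makes $\hat{\Phi}(\cdot,\cdot,\bm y^{k+1},\bm \lambda^k)$ strongly convex on the feasible set with modulus of order $\beta a_1-L$. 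Applying the strong-convexity inequality between $(\bm x^k,\bm w^k)$ and $(\bm x^{k+1},\bm w^{k+1})$ and absorbing the inner product $\langle[\eta_{\bm x}^{k+1};\eta_{\bm w}^{k+1}],[\Delta\bm x;\Delta\bm w]\rangle$ by Young's inequality yields $\Delta_{(\bm x,\bm w)}\ge \tfrac{\beta a_1-L}{2}\|[\Delta\bm x;\Delta\bm w]\|_2^2$ up to a controllable error. Finally, because $\mathcal{L}_\beta$ is affine in $\bm \lambda$ and the update (\ref{eq:lambda}) gives $\bm y^{k+1}-D\bm x^{k+1}=\tfrac{1}{\beta}(\bm \lambda^k-\bm \lambda^{k+1})$, one obtains $\Delta_{\bm \lambda}=-\tfrac{1}{\beta}\|\bm \lambda^{k+1}-\bm \lambda^k\|_2^2$, the only term working against the decrease.

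The crux, and the step I expect to be the main obstacle, is to bound $\|\bm \lambda^{k+1}-\bm \lambda^k\|_2^2$ by the primal movements, which is where Lemma \ref{lemma1} and the constants $a_1,a_2$ enter. Part 2 of Lemma \ref{lemma1} controls $\|D^\top(\bm \lambda^{k+1}-\bm \lambda^k)\|_2$ by $C_0\|[\Delta\bm x;\Delta\bm w]\|_2+\|\eta_{\bm x}^{k+1}-\eta_{\bm x}^k\|_2$; converting this $D^\top$-seminorm bound into a genuine bound on $\|\bm \lambda^{k+1}-\bm \lambda^k\|_2$ (using the lower curvature constant $a_1$ of $D^\top D$ on the relevant range, the splitting constant $a_2$, and $(s+t)^2\le 2s^2+2t^2$) produces $\Delta_{\bm \lambda}\ge -c_1\|[\Delta\bm x;\Delta\bm w]\|_2^2-\tfrac{2}{\beta}\|\eta_{\bm x}^{k+1}-\eta_{\bm x}^k\|_2^2$, where $c_1$ depends on $a_1,a_2,C_0,\beta$. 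Summing the three estimates, the coefficient of $\|[\Delta\bm x;\Delta\bm w]\|_2^2$ becomes $\tfrac{\beta a_1-L}{2}-c_1$, and a short computation shows this is positive exactly under the stated condition $\beta^2a_1-L\beta-4a_2C_0>0$; taking $C_1$ to be the minimum of this coefficient and $\tfrac{\beta}{2}$ proves the sufficient-decrease inequality with the error term $-\tfrac{2}{\beta}\|\eta_{\bm x}^{k+1}-\eta_{\bm x}^k\|_2^2$ exactly as claimed. Certifying that the inexactly solved \emph{coupled} block still delivers a clean $\|[\Delta\bm x;\Delta\bm w]\|_2^2$ decrease (so that the modulus $\beta a_1-L$ survives both the residual and the coupling through $A(\bm w)$), and matching $a_1,a_2,C_0$ so that $\beta^2a_1-L\beta-4a_2C_0>0$ is precisely what makes $C_1>0$, is the delicate part.

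For the remaining claims I would proceed as follows. The lower bound I would obtain from the $\bm y$-subproblem optimality, which yields $\bm \lambda_i^{k+1}=s_i^{k+1}+\beta D_i(\bm x^{k+1}-\bm x^k)$ with $s_i^{k+1}\in\partial\|\mathbf y_i^{k+1}\|_2$, $\|s_i^{k+1}\|_2\le 1$, and $\langle s_i^{k+1},\mathbf y_i^{k+1}\rangle=\|\mathbf y_i^{k+1}\|_2$; substituting this into $\mathcal{L}_\beta^{k+1}$ cancels $F(\bm y^{k+1})$ against $\langle\bm \lambda^{k+1},\bm y^{k+1}\rangle$ and leaves the manifestly nonnegative fidelity, TV, and $S$ terms together with cross terms controlled by the successive differences, so that $\{\mathcal{L}_\beta^k\}$ is bounded below when reconciled with the decrease inequality. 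Telescoping that inequality and using $\sum_k\epsilon_{k+1}<\infty$ (hence $\sum_k\|\eta_{\bm x}^{k+1}-\eta_{\bm x}^k\|_2^2<\infty$) then shows $\{\mathcal{L}_\beta^k\}$ converges and $\sum_k(\|\bm y^k-\bm y^{k+1}\|_2^2+\|[\Delta\bm x;\Delta\bm w]\|_2^2)<\infty$, so the successive differences vanish. Boundedness of the iterates follows term by term: coercivity of $F(\bm y)=\sum_i\|\mathbf y_i\|_2$ bounds $\{\bm y^k\}$; the completed square $\tfrac{\beta}{2}\|\bm y^{k}-D\bm x^{k}-\tfrac{1}{\beta}\bm \lambda^{k}\|_2^2-\tfrac{1}{2\beta}\|\bm \lambda^k\|_2^2$ in $\mathcal{L}_\beta^k$ bounds $\{D\bm x^k\}$, which together with $\bm x^k\ge\bm 0$ and the fidelity term (using that $A(\bm w)$ does not annihilate the constant mode) bounds $\{\bm x^k\}$; the constraint $\bm w^k\ge\bm 0$ with $S(\bm w^k)$ bounds $\{\bm w^k\}$; and $\{\bm \lambda^k\}$ is bounded by the subgradient representation above since $\bm x^{k+1}-\bm x^k\to\bm 0$. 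The secondary delicate point is that these lower-bound cross terms must be reconciled with the summable successive differences rather than bounded outright.
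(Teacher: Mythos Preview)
Your telescoping decomposition and your treatment of $\Delta_{\bm y}$ and $\Delta_{\bm \lambda}$ match the paper, but your handling of $\Delta_{(\bm x,\bm w)}$ contains a genuine gap. You assert that the curvature $\beta D^\top D$ in $\bm x$, once $\beta a_1>\lambda_{\rm max}(A(\bm w^k)^\top A(\bm w^k))$, ``makes $\hat{\Phi}(\cdot,\cdot,\bm y^{k+1},\bm \lambda^k)$ strongly convex on the feasible set with modulus of order $\beta a_1-L$'' and then apply the strong-convexity inequality jointly in $(\bm x,\bm w)$. This is false: the fidelity term $\tfrac{\mu}{2}\|A(\bm w)\bm x-\bm d\|_2^2$ is nonconvex in the coupled pair $(\bm x,\bm w)$, the $R$-term contributes curvature only in $\bm x$, and $S(\bm w)=\tfrac{\xi}{2}(\bm e^\top\bm w-1)^2$ is rank-one, so no joint strong convexity modulus exists. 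The paper circumvents this by \emph{not} using joint convexity at all. It splits
\[
g(\bm x^k,\bm w^k)-g(\bm x^{k+1},\bm w^{k+1})=\bigl[g(\bm x^k,\bm w^k)-g(\bm x^k,\bm w^{k+1})\bigr]+\bigl[g(\bm x^k,\bm w^{k+1})-g(\bm x^{k+1},\bm w^{k+1})\bigr],
\]
bounds the first bracket from below by $-\tfrac{L}{2}\|\bm w^{k+1}-\bm w^k\|_2^2$ using $L$-Lipschitz differentiability of $g$ in $\bm w$ (the descent lemma), and bounds the second bracket using convexity of $g$ in $\bm x$ for fixed $\bm w^{k+1}$ together with the inexact stationarity $\nabla_{\bm x}g(\bm x^{k+1},\bm w^{k+1})=-D^\top\bm\lambda^{k+1}+\eta_{\bm x}^{k+1}$ from Lemma~\ref{lemma1}. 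The $\tfrac{\beta a_1}{2}$ gain then comes not from any strong convexity but from the explicit quadratic $\tfrac{\beta}{2}\sum_i\|D_i(\bm x^{k+1}-\bm x^k)\|_2^2$ that survives after the cosine-rule manipulation and the multiplier update are absorbed. This split-and-descent-lemma maneuver is precisely the missing idea in your proposal.

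Your lower-bound argument also differs from the paper. You try to cancel $F(\bm y^{k+1})$ against $\langle\bm\lambda^{k+1},\bm y^{k+1}\rangle$ via a subgradient representation of $\bm\lambda^{k+1}$; the paper instead exploits that each $D_i$ has full row rank, picks $\hat{\bm x}$ with $D_i\hat{\bm x}=\mathbf y_i^k$, and uses Lemma~\ref{lemma1} (namely $\nabla_{\bm x}g(\bm x^k,\bm w^k)=-D^\top\bm\lambda^k+\eta_{\bm x}^k$) to rewrite the multiplier term as a first-order expansion of $g$ around $\bm x^k$, arriving at an expression that is manifestly bounded below and coercive in $\bm y$ and $\hat{\bm x}-\bm x^k$. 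Your route may be salvageable, but note your subgradient identity reads $\bm\lambda_i^{k+1}=s_i^{k+1}+\beta D_i(\bm x^{k+1}-\bm x^k)$ only after shifting by the update, and the residual cross terms you mention still need the $D$-curvature constants to close; the paper's $\hat{\bm x}$ trick avoids these difficulties entirely.
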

\begin{proof}
According to the optimality condition of the $\bm y$-subproblem, we define
\begin{equation*}
d^{k+1}_{i}:=\left(\bm \lambda^{k}\right)_{i}+\beta(\bm y^{k+1}_{i}-D_{i}\bm x^{k})\in \partial \|\mathbf y_{i}^{k+1}\|_{2}.
\end{equation*}
We know from the definition of $\mathcal{L}_{\beta}(\bm y^k, \bm x^k, \bm w^k, \bm \lambda^k)$ that
\begin{equation*}
\begin{aligned}
&\mathcal{L}_{\beta}(\bm y^k, \bm x^k, \bm w^k, \bm \lambda^k)-\mathcal{L}_{\beta}(\bm y^{k+1}, \bm x^k, \bm w^k, \bm \lambda^k)\\
=& F(\bm y^{k})-F(\bm y^{k+1})-\sum_{i=1}^{n^2}\left(\bm \lambda^{k}\right)_{i}^{T}(\mathbf y_{i}^{k}-\mathbf y_{i}^{k+1})+\frac{\beta}{2}\sum_{i=1}^{n^2}\|\mathbf y_{i}^{k}-D_{i}\bm x^{k}\|_{2}-\frac{\beta}{2}\sum_{i=1}^{n^2}\|\mathbf y_{i}^{k+1}-D_{i}\bm x^{k}\|_{2}\\
=&F(\bm y^{k})-F(\bm y^{k+1})-\sum_{i=1}^{n^2}\left(\bm \lambda^{k}\right)_{i}^{T}(\mathbf y_{i}^{k}-\mathbf y_{i}^{k+1})+\frac{\beta}{2}\sum_{i=1}^{n^2}\left(\|\mathbf y_{i}^{k}-\mathbf y_{i}^{k+1}\|_{2}^{2}+2\langle \mathbf y_{i}^{k+1}-D_{i}\bm x^{k}, \mathbf y_{i}^{k}-\mathbf y_{i}^{k+1}\rangle\right)\\
= &\sum_{i=1}^{n^2}\left(\|\mathbf y_{i}^{k}\|_{2}-\|\mathbf y_{i}^{k+1}\|_{2}-\left\langle d^{k+1}_{i}, \mathbf y_{i}^{k}-\mathbf y_{i}^{k+1}\right\rangle\right)+\frac{\beta}{2}\sum_{i=1}^{n^2}\|\mathbf y_{i}^{k}-\mathbf y_{i}^{k+1}\|_{2}^{2}\\
\geq &\frac{\beta}{2}\sum_{i=1}^{n^2}\|\mathbf y_{i}^{k}-\mathbf y_{i}^{k+1}\|_{2}^{2},
\end{aligned}
\end{equation*}
where the second equality follows from the cosine rule: $\|b+c\|^{2}-\|a+c\|^{2}=\|b-a\|^{2}+2\langle a+c, b-a\rangle$ and the last inequality follows from the convexity of $\|\mathbf y\|_{2}$.

Moreover, we have
\begin{equation*}
\begin{aligned}
&\mathcal{L}_{\beta}(\bm y^{k+1}, \bm x^k, \bm w^k, \bm \lambda^k)-\mathcal{L}_{\beta}(\bm y^{k+1}, \bm x^{k+1}, \bm w^{k+1}, \bm \lambda^{k+1})\\
=& g(\bm x^{k}, \bm w^{k})-g(\bm x^{k+1}, \bm w^{k+1})
-\sum_{i=1}^{n^2}\left((\bm \lambda^{k})_{i}^{T}-(\bm \lambda^{k+1})_{i}^{T}\right)\mathbf y^{k+1}_{i}+\sum_{i=1}^{n^2}\left(\bm \lambda^{k}\right)_{i}^{T}D_{i}\bm x^{k}-\sum_{i=1}^{n^2}\left(\bm \lambda^{k+1}\right)_{i}^{T}D_{i}\bm x^{k+1}\\
&+\sum_{i=1}^{n^2}\frac{\beta}{2}\|\mathbf y_{i}^{k+1}-D_{i}\bm x^{k}\|_{2}+\sum_{i=1}^{n^2}\frac{\beta}{2}\|\mathbf y_{i}^{k+1}-D_{i}\bm x^{k+1}\|_{2}\\
=& g(\bm x^{k}, \bm w^{k})-g(\bm x^{k}, \bm w^{k+1})+g(\bm x^{k}, \bm w^{k+1})-g(\bm x^{k+1}, \bm w^{k+1})-\sum_{i=1}^{n^2}\left((\bm \lambda^{k})_{i}^{T}-(\bm \lambda^{k+1})_{i}^{T}\right)\mathbf y^{k+1}_{i}\\
&+\sum_{i=1}^{n^2}\left(\bm \lambda^{k}\right)_{i}^{T}D_{i}\bm x^{k}-\sum_{i=1}^{n^2}\left(\bm \lambda^{k+1}\right)_{i}^{T}D_{i}\bm x^{k}+\sum_{i=1}^{n^2}\left(\bm \lambda^{k+1}\right)_{i}^{T}D_{i}\bm x^{k}-\sum_{i=1}^{n^2}\left(\bm \lambda^{k+1}\right)_{i}^{T}D_{i}\bm x^{k}\\
&+\frac{\beta}{2}\sum_{i=1}^{n^2}\left(\|-D_{i}\bm x^{k}+D_{i}\bm x^{k+1}\|_{2}+2\langle -D_{i}\bm x^{k+1}+\mathbf y_{i}^{k+1}, -D_{i}\bm x^{k}+D_{i}\bm x^{k+1}\rangle\right)\\
 \geq & -\frac{L}{2}\|\bm w^{k+1}-\bm w^{k}\|_{2}^{2} +\frac{\beta}{2}\sum_{i=1}^{n^2}\left(\|-D_{i}\bm x^{k}+D_{i}\bm x^{k+1}\|_{2}+2 \langle-D_{i}\bm x^{k+1}+\mathbf y_{i}^{k+1}, -D_{i}\bm x^{k}+D_{i}\bm x^{k+1}\rangle\right)\\
&+\sum_{i=1}^{n^2}\beta \langle\mathbf y_{i}^{k+1}-D_{i}\bm x^{k+1}, -\mathbf y_{i}^{k+1}+D_{i}\bm x^{k+1} \rangle\\
=&-\frac{L}{2}\|\bm w^{k+1}-\bm w^{k}\|_{2}^{2}-\frac{1}{\beta}\sum_{i=1}^{n^2}\|\left(\bm \lambda^{k+1}\right)_{i}-\left(\bm \lambda^{k}\right)_{i}\|_{2}^{2}+\frac{\beta}{2}\sum_{i=1}^{n^2}\|-D_{i}\bm x^{k}+D_{i}\bm x^{k+1}\|_{2}^{2}\\
\geq& -\frac{L}{2}\left\|\bigg[\begin{array}{c}{\bm x^{k+1}-\bm x^{k}} \\ {\bm w^{k+1}-\bm w^{k}}\end{array}\bigg]\right\|_{2}^{2}-\frac{2a_{2}C_{0}}{\beta}\left\|\bigg[\begin{array}{c}{\bm x^{k+1}-\bm x^{k}} \\ {\bm w^{k+1}-\bm w^{k}}\end{array}\bigg]\right\|_{2}^{2}+\frac{\beta a_{1}}{2}\left\|\bigg[\begin{array}{c}{\bm x^{k+1}-\bm x^{k}} \\ {\bm w^{k+1}-\bm w^{k}}\end{array}\bigg]\right\|_{2}^{2}-\frac{2}{\beta}\|\eta^{k}_{\bm x}-\eta^{k+1}_{\bm x}\|_{2}^{2}\\
=& C\left\|\bigg[\begin{array}{c}{\bm x^{k+1}-\bm x^{k}} \\ {\bm w^{k+1}-\bm w^{k}}\end{array}\bigg]\right\|_{2}^{2}-\frac{2}{\beta}\|\eta^{k}_{\bm x}-\eta^{k+1}_{\bm x}\|_{2}^{2},
\end{aligned}
\end{equation*}
where $C:=-\frac{L}{2}-\frac{2a_{2}C_{0}}{\beta}+\frac{\beta a_{1}}{2}$ is a constant, $a_{1}, a_{2}$ are constants such that for $i=1, 2, \dots, n^{2}$, $\|D_{i}(\bm x^{k+1}-\bm x^{k})\|_{2}^{2}\geq a_{1}\|\bm x^{k+1}-\bm x^{k}\|_{2}^{2}$ and $\|\bm (\lambda^{k+1})_{i}-(\lambda^{k})_{i}\|_{2}^{2}\leq a_{2}\|D_{i}^{T}\left(\bm (\lambda^{k+1})_{i}-(\lambda^{k})_{i}\right)\|_{2}^{2}$, respectively. The second equality follows from the cosine rule; the third inequality follows from the convexity of $g(\bm x^{k}, \bm w^{k+1})$ with respect to $\bm x$ and the Lipschitz differentiable property of $-g(\bm x^{k+1}, \bm w^{k})$ with respect to $\bm w$ \cite{Wang2019}, $L$ is the Lipschitz constant; the fourth equality follows from the definition of the multiplier $\bm \lambda^{k+1}=\bm \lambda^{k}-\beta \left(\bm y^{k+1}-D\bm x^{k+1}\right)$; the fifth inequality follows from Lemma \ref{lemma1}. In order to ensure $C>0$, we require $\beta$ satisfies $\beta^{2}a_{1}-L\beta-4a_{2}C_{0}>0$.

Then we know from two equalities above that
\begin{equation}
\begin{aligned}
&\mathcal{L}_{\beta}(\bm y^k, \bm x^k, \bm w^k, \bm \lambda^k)-\mathcal{L}_{\beta}(\bm y^{k+1}, \bm x^{k+1}, \bm w^{k+1}, \bm \lambda^{k+1}) \\
\geq &C_{1}\left(\sum_{i=1}^{n^2}\|\mathbf y_{i}^{k}-\mathbf y_{i}^{k+1}\|_{2}^{2}+\left\|\bigg[\begin{array}{c}{\bm x^{k+1}-\bm x^{k}} \\ {\bm w^{k+1}-\bm w^{k}}\end{array}\bigg]\right\|_{2}^{2}\right)- \frac{2}{\beta}\|\eta^{k}_{\bm x}-\eta^{k+1}_{\bm x}\|_{2}^{2}.
\end{aligned}
\end{equation}
This means for all sufficiently large $k$, $\mathcal{L}_{\beta}(\bm y^k, \bm x^k, \bm w^k, \bm \lambda^k)$ is nonincreasing. As $D_{i}\in \mathbb{R}^{2\times n^2}$ has full row rank, then there exists at one $\hat {\bm x}$ such that $D_{i}\hat {\bm x} = {\mathbf y}_{i}^{k}, \ \forall i=1,2,...,n^2$.
Thus, we arrive at
\begin{equation*}
\begin{aligned}
&\mathcal{L}_{\beta}(\bm y^k, \bm x^k, \bm w^k, \bm \lambda^k)\\
=& \frac{\mu}{2} \| A(\bm w^{k}) \bm x^{k} - \bm d \|_2^2+\sum_{i=1}^{n^2}\|\mathbf y_{i}^{k}\|_{2}-\sum_{i=1}^{n^2}\left(\bm \lambda^{k}\right)_{i}^{T}(\mathbf y_{i}^{k}-D_{i}\bm x^{k})+\frac{\beta}{2}\sum_{i=1}^{n^2}\|\mathbf y_{i}^{k}-D_{i}\bm x^{k}\|_{2}^2\\
=&\sum_{i=1}^{n^2}\|\mathbf y_{i}^{k}\|_{2}+\frac{\beta}{2}\sum_{i=1}^{n^2}\|\mathbf y_{i}^{k}-D_{i}\bm x^{k}\|_{2}^2+\frac{\mu}{2} \| A(\bm w^{k}) \bm x^{k} - \bm d \|_2^2-\sum_{i=1}^{n^2}\langle D_{i}^{T}\left(\bm \lambda^{k}\right)_{i}, \hat{\bm x}-\bm x^{k}\rangle,\\
=&\sum_{i=1}^{n^2}\|\mathbf y_{i}^{k}\|_{2}+\frac{\beta}{2}\sum_{i=1}^{n^2}\|\mathbf y_{i}^{k}-D_{i}\bm x^{k}\|_{2}^2+\frac{\mu}{2} \| A(\bm w^{k}) \hat{\bm x}^{k} - \bm d \|_2^2\\
=&\sum_{i=1}^{n^2}\|\mathbf y_{i}^{k}\|_{2}+\frac{\mu}{2} \| A(\bm w^{k}) \hat{\bm x}^{k} - \bm d \|_2^2+\frac{\beta}{2}\sum_{i=1}^{n^2}\|D_{i}(\hat{\bm x}-\bm x^{k})\|_{2}^{2}\\
\geq &\sum_{i=1}^{n^2}\|\mathbf y_{i}^{k}\|_{2}+\frac{\mu}{2} \| A(\bm w^{k}) \hat{\bm x}^{k} - \bm d \|_2^2+\frac{\beta a_{1}}{2}\|\hat{\bm x}-\bm x^{k}\|_{2}^{2}\\
>&-\infty.
\end{aligned}
\end{equation*}
Since $\mathcal{L}_{\beta}(\bm y^k, \bm x^k, \bm w^k, \bm \lambda^k)$ is lower bounded and $\sum_{i=1}^{n^2}\|\mathbf y_{i}^{k}\|_{2}+\frac{\mu}{2} \| A(\bm w^{k}) \hat{\bm x}^{k}\|_{2}^{2}+(\frac{1}{2}\beta a_{1}-\frac{1}{2}\lambda_{\rm max}(A(\bm w^{k})^{T}A(\bm w^{k})))\|\hat{\bm x}-\bm x^{k}\|_{2}^{2}$ is coercive over the feasible set $\Omega_{F}:=\{(\bm y, \bm x, \bm w): D_{i}\bm x= \mathbf y_{i}, \ i=1,2,\dots,n^{2}\}$, we conclude that $\{\bm y^k\}$ and $\{(\bm x^k, \bm w^k)\}$ are bounded.
Hence, by Lemma \ref{lemma1}, $\{\bm \lambda^k\}$ is bounded.
\end{proof}

\begin{lemma}
\label{lemma3}
Let $\partial \mathcal{L}_{\beta}(\bm y^{k+1}, \bm x^{k+1}, \bm w^{k+1}, \bm \lambda^{k+1})=\left(\partial_{\bm y} \mathcal{L}_{\beta}, \nabla_{(\bm x, \bm w)} \mathcal{L}_{\beta}, \nabla_{\bm \lambda} \mathcal{L}_{\beta}\right)$. Then there exists a constant $\tilde C>0$ such that for all $k\geq 1$, for some $p^{k+1}\in \partial \mathcal{L}_{\beta}(\bm y^{k+1}, \bm x^{k+1}, \bm w^{k+1}, \bm \lambda^{k+1})$, we have
\begin{equation*}
\|p^{k+1}\|_{2}\leq \tilde C\left(\left\|\bigg[\begin{array}{c}{\bm x^{k+1}-\bm x^{k}} \\ {\bm w^{k+1}-\bm w^{k}}\end{array}\bigg]\right\|_{2}+\|\eta^{k+1}_{\bm x}-\eta^{k}_{\bm x}\|_{2}\right).
\end{equation*}
\end{lemma}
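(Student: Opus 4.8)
The plan is to exhibit, at the $(k+1)$-st iterate, one admissible element $p^{k+1}=\big(p^{k+1}_{\bm y},\,p^{k+1}_{(\bm x,\bm w)},\,p^{k+1}_{\bm \lambda}\big)$ of the three blocks of $\partial\mathcal{L}_\beta(\bm y^{k+1},\bm x^{k+1},\bm w^{k+1},\bm \lambda^{k+1})$ and to bound each block separately, using the three pieces of optimality information Algorithm \ref{alg:ADMM-LAP} produces at step $k$: the \emph{exact} optimality of the $\bm y$-subproblem \eqref{eq:suby}, which furnishes an element $d^{k+1}_i\in\partial\|\mathbf y_i^{k+1}\|_2$ expressed through $(\bm \lambda^{k})_i$, $\mathbf y_i^{k+1}$ and $\bm x^{k}$; Lemma \ref{lemma1}(1), the \emph{inexact} optimality of the $(\bm x,\bm w)$-subproblem recorded through the residual $\eta^{k+1}_{\bm x}$; and the multiplier update \eqref{eq:lambda}. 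Wherever $\partial\mathcal{L}_\beta$ involves the normal cones of $\mathcal{C}_{\bm x}$ or $\mathcal{C}_{\bm w}$ I use the freedom in ``for some $p^{k+1}$'' and pick the zero element, as in the proof of Lemma \ref{lemma1}.

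For the $\bm y$-block, start from $p^{k+1}_{\mathbf y_i}=d^{k+1}_i-(\bm \lambda^{k+1})_i+\beta(\mathbf y_i^{k+1}-D_i\bm x^{k+1})\in\partial_{\mathbf y_i}\mathcal{L}_\beta$; substituting the $\bm y$-optimality condition for $d^{k+1}_i$ and then \eqref{eq:lambda}, the terms in $\beta\mathbf y_i^{k+1}$ cancel, leaving $p^{k+1}_{\mathbf y_i}=-\beta D_i(\bm x^{k+1}-\bm x^{k})-\big((\bm \lambda^{k+1})_i-(\bm \lambda^{k})_i\big)$. No $\bm y$-difference survives -- this is why $\|\bm y^{k+1}-\bm y^{k}\|_2$ is absent from the bound -- and stacking over $i$ gives $\|p^{k+1}_{\bm y}\|_2\le\beta\|D\|_2\|\bm x^{k+1}-\bm x^{k}\|_2+\|\bm \lambda^{k+1}-\bm \lambda^{k}\|_2$. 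For the $\bm \lambda$-block, $p^{k+1}_{\bm \lambda}=\nabla_{\bm \lambda}\mathcal{L}_\beta=-(\bm y^{k+1}-D\bm x^{k+1})=\tfrac1\beta(\bm \lambda^{k+1}-\bm \lambda^{k})$ directly from \eqref{eq:lambda}. So both blocks are controlled once $\|\bm \lambda^{k+1}-\bm \lambda^{k}\|_2$ is.

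For the $(\bm x,\bm w)$-block, a direct computation -- using Lemma \ref{lemma1}(1), \eqref{eq:lambda}, and the fact that $R(\bm x,\bm y,\bm \lambda)$ does not depend on $\bm w$ -- shows that, with the zero element of the product normal cone, one admissible choice is $p^{k+1}_{(\bm x,\bm w)}=\big[\,\eta^{k+1}_{\bm x}+\sum_i D_i^{\top}\big((\bm \lambda^{k+1})_i-(\bm \lambda^{k})_i\big)\,;\;\eta^{k+1}_{\bm w}\,\big]$. Lemma \ref{lemma1}(2) bounds $\big\|\sum_i D_i^{\top}\big((\bm \lambda^{k+1})_i-(\bm \lambda^{k})_i\big)\big\|_2$ by $C_0\big\|[\bm x^{k+1}-\bm x^{k};\bm w^{k+1}-\bm w^{k}]\big\|_2+\|\eta^{k+1}_{\bm x}-\eta^{k}_{\bm x}\|_2$, and the remaining piece $\|[\eta^{k+1}_{\bm x};\eta^{k+1}_{\bm w}]\|_2$ is $\le\sqrt2\,\epsilon_{k+1}$ by Step 3, hence a summable quantity. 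Finally, to close the $\bm y$- and $\bm \lambda$-blocks I upgrade $\big\|\sum_i D_i^{\top}((\bm \lambda^{k+1})_i-(\bm \lambda^{k})_i)\big\|_2$ to a bound on $\|\bm \lambda^{k+1}-\bm \lambda^{k}\|_2$ itself, using the full row rank of each $D_i\in\mathbb R^{2\times n^2}$ -- the same constant $a_2$ appearing in Lemma \ref{lemma2}. Taking $\tilde C$ to be the maximum of $\beta\|D\|_2$, $1/\beta$, $C_0$ and this $a_2$-dependent factor then yields the asserted inequality.

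The step I expect to be the main obstacle is this last one: passing from $\big\|\sum_i D_i^{\top}((\bm \lambda^{k+1})_i-(\bm \lambda^{k})_i)\big\|_2$ to $\|\bm \lambda^{k+1}-\bm \lambda^{k}\|_2$. Since the operators $D_i^{\top}$ have overlapping finite-difference stencils, $\sum_i\|D_i^{\top}v_i\|_2^2$ and $\big\|\sum_i D_i^{\top}v_i\big\|_2^2$ are not comparable term by term, so one has to use the global block structure of $D=(D^{(1)};D^{(2)})$ carefully and keep the resulting constant consistent with the definitions of $a_1$, $a_2$ and $C_0$ in Lemmas \ref{lemma1}--\ref{lemma2}. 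Tracking the inexactness residual $\eta^{k+1}$ through the estimate is a secondary bookkeeping point; the rest of the argument is substitution and the triangle inequality.
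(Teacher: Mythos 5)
Your proposal is correct in substance and follows essentially the same route as the paper's proof: you take the same representatives in each block of $\partial\mathcal{L}_{\beta}$ (the $\bm y$-block element built from the exact optimality of the $\bm y$-subproblem together with the update \eqref{eq:lambda}, which collapses to $-\beta D_i(\bm x^{k+1}-\bm x^{k})-\bigl((\bm\lambda^{k+1})_i-(\bm\lambda^{k})_i\bigr)$; the $\bm\lambda$-block $\tfrac{1}{\beta}(\bm\lambda^{k+1}-\bm\lambda^{k})$; and an $(\bm x,\bm w)$-block controlled via Lemma \ref{lemma1}), and you assemble the estimate by the triangle inequality exactly as the paper does. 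Two points of comparison are worth recording. First, you retain the inexactness residuals $\eta^{k+1}_{\bm x},\eta^{k+1}_{\bm w}$ in the $(\bm x,\bm w)$-block; this is the honest consequence of Lemma \ref{lemma1}(1), but it leaves an extra additive term of size $O(\epsilon_{k+1})$ that the right-hand side of the lemma, as stated, does not cover. The paper's proof avoids this only by writing $\nabla_{(\bm x,\bm w)}\mathcal{L}_{\beta}$ without the $\eta^{k+1}$ contributions (its displayed formula has $\bm 0$ in the $\bm w$-slot), which is consistent with Lemma \ref{lemma1}(1) only when the $(\bm x,\bm w)$-subproblem is solved exactly; since $\epsilon_{k+1}$ is summable and tends to zero, your slightly weaker bound still supports the only use made of the lemma, namely $\|p^{k+1}\|_{2}\to 0$ in the proof of Theorem \ref{convergence theorem}. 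Second, the step you flag as the main obstacle --- relating $\bigl\|\sum_i D_i^{\top}\bigl((\bm\lambda^{k+1})_i-(\bm\lambda^{k})_i\bigr)\bigr\|_2$, $\sum_i\bigl\|D_i^{\top}\bigl((\bm\lambda^{k+1})_i-(\bm\lambda^{k})_i\bigr)\bigr\|_2$ and $\|\bm\lambda^{k+1}-\bm\lambda^{k}\|_2$ --- is not actually resolved in the paper either: the printed proof passes from the sum of norms to the bound of Lemma \ref{lemma1}(2), which controls only the norm of the sum, and invokes the constant $a_{1}$ where, by the definitions in Lemma \ref{lemma2}, $a_{2}$ is the relevant one. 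So your caution is well placed, but it marks a looseness shared with the paper rather than a defect peculiar to your argument; both proofs ultimately rest on the comparability encoded in the constants $a_{1},a_{2}$.
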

\begin{proof}
Because
\begin{equation*}
\begin{aligned}
\nabla_{\bm \lambda_{i}} L_{\beta}(\bm y^{k+1}, \bm x^{k+1}, \bm w^{k+1}, \bm \lambda^{k+1})=-(\mathbf y_{i}^{k+1}-D_{i}\bm x^{k+1})=\frac{1}{\beta}\left(\left(\bm \lambda^{k+1}\right)_{i}-\left(\bm \lambda^{k}\right)_{i}\right),
\end{aligned}
\end{equation*}
and
\begin{equation*}
\nabla_{(\bm x, \bm w)} L_{\beta}(\bm y^{k+1}, \bm x^{k+1}, \bm w^{k+1}, \bm \lambda^{k+1})=
\begin{bmatrix}
\sum_{i=1}^{n^2}D_{i}^{T}\left(\left( \bm \lambda^{k+1}\right)_{i}-\left(\bm \lambda^{k}\right)_{i}\right)\\
\bm 0
\end{bmatrix},
\end{equation*}
we have,
\begin{equation*}
\begin{aligned}
\|\nabla_{\bm \lambda} L_{\beta}(\bm y^{k+1}, \bm x^{k+1}, \bm w^{k+1}, \bm \lambda^{k+1})\|_{2}=&\frac{1}{\beta}\sum_{i=1}^{n^2}\|\left(\bm \lambda^{k+1}\right)_{i}-\left(\bm \lambda^{k}\right)_{i}\|_{2}\\
\leq &\frac{1}{\beta\sqrt{a_{1}}}\sum_{i=1}^{n^2}\|D_{i}^{T}\left(\left(\bm \lambda^{k+1}\right)_{i}-\left(\bm \lambda^{k}\right)_{i}\right)\|_{2}\\
\leq &\frac{1}{\beta\sqrt{a_{1}}}
\left(C_{0}\left\|\bigg[\begin{array}{c}{\bm x^{k+1}-\bm x^{k}} \\ {\bm w^{k+1}-\bm w^{k}}\end{array}\bigg]\right\|_{2}+ \|\eta^{k+1}_{\bm x}-\eta^{k}_{\bm x}\|_{2}\right)
\end{aligned}
\end{equation*}
and
\begin{equation*}
\|\nabla_{(\bm x, \bm w)} L_{\beta}(\bm y^{k+1}, \bm x^{k+1}, \bm w^{k+1}, \bm \lambda^{k+1})\|_{2} \leq C_{0}\left\|\bigg[\begin{array}{c}{\bm x^{k+1}-\bm x^{k}} \\ {\bm w^{k+1}-\bm w^{k}}\end{array}\bigg]\right\|_{2}+ \|\eta^{k+1}_{\bm x}-\eta^{k}_{\bm x}\|_{2}.
\end{equation*}
By the definition of the subgradient, we make use of the decomposition and obtain
\begin{equation*}
\begin{aligned}
&\partial_{\bm y}L_{\beta}(\bm y^{k+1}, \bm x^{k+1}, \bm w^{k+1}, \bm \lambda^{k+1})\\
=&\partial \sum_{i=1}^{n^2}\left(\|\mathbf y_{i}^{k+1}\|_{2}-\left(\bm \lambda^{k+1}\right)_{i}+\beta(\mathbf y_{i}^{k+1}-D_{i}\bm x^{k+1})\right)\\
=&\partial \sum_{i=1}^{n^2}\left(\|\mathbf y_{i}^{k+1}\|_{2}-\left(\bm \lambda^{k}\right)_{i}+\beta(\mathbf y_{i}^{k+1}-D_{i}\bm x^{k})-\left(\bm \lambda^{k+1}\right)_{i} +\left(\bm \lambda^{k}\right)_{i}+\beta(D_{i}\bm x^{k}-D_{i}\bm x^{k+1})\right).
\end{aligned}
\end{equation*}
Thus, according to the optimal condition
\begin{equation*}
0\in \partial \|\mathbf y_{i}^{k+1}\|_{2}-\left(\bm \lambda^{k}\right)_{i}+\beta(D_{i}\bm x^{k}-D_{i}\bm x^{k+1}),
\end{equation*}
we have,
\begin{equation*}
-\sum_{i=1}^{n^2}\left(\left(\bm \lambda^{k+1}\right)_{i}-\left(\bm \lambda^{k}\right)_{i}\right)+\beta(D_{i}\bm x^{k}-D_{i}\bm x^{k+1})\in \partial _{\bm y}L_{\beta}(\bm y^{k+1}, \bm x^{k+1}, \bm w^{k+1}, \bm \lambda^{k+1}).
\end{equation*}
Letting
\begin{equation*}
\begin{aligned}
p^{k+1}:=(p_{\bm y}^{k+1}, \ p_{(\bm x, \bm w)}^{k+1},\ p_{\bm \lambda}^{k+1}),
\end{aligned}
\end{equation*}
where
\begin{equation*}
\begin{aligned}
&p_{\bm y}^{k+1}:=-\sum_{i=1}^{n^2}\left(\left(\bm \lambda^{k+1}\right)_{i}-\left(\bm \lambda^{k}\right)_{i}\right)+\beta(D_{i}\bm x^{k}-D_{i}\bm x^{k+1}),\\
&p_{(\bm x, \bm w)}^{k+1}:=\sum_{i=1}^{n^2}D_{i}^{T}\left(\left(\bm \lambda^{k+1}\right)_{i}-\left(\bm \lambda^{k}\right)_{i}\right),\\
&p_{\bm \lambda}^{k+1}:=\frac{1}{\beta}\sum_{i=1}^{n^2}\left(\left(\bm \lambda^{k+1}\right)_{i}-\left(\bm \lambda^{k}\right)_{i}\right).
\end{aligned}
\end{equation*}
Then we have,
\begin{equation*}
\begin{aligned}
\|p^{k+1}\|_{2}\leq& \frac{C_{0}}{\beta\sqrt{a_{1}}}\left\|\bigg[\begin{array}{c}{\bm x^{k+1}-\bm x^{k}} \\ {\bm w^{k+1}-\bm w^{k}}\end{array}\bigg]\right\|_{2}+\frac{C_{0}}{\sqrt{a_{1}}}\left\|\bigg[\begin{array}{c}{\bm x^{k+1}-\bm x^{k}} \\ {\bm w^{k+1}-\bm w^{k}}\end{array}\bigg]\right\|_{2}+C_{0}\left\|\bigg[\begin{array}{c}{\bm x^{k+1}-\bm x^{k}} \\ {\bm w^{k+1}-\bm w^{k}}\end{array}\bigg]\right\|_{2}\\
&+\beta\sum_{i=1}^{n^2}\|D_{i}\|_{2}\|\bm x^{k+1}-\bm x^{k}\|_{2}+\left(\frac{C_{0}}{\beta\sqrt{a_{1}}}+\frac{C_{0}}{\sqrt{a_{1}}}+C_{0}\right)\|\eta^{k+1}_{\bm x}-\eta^{k}_{\bm x}\|_{2}\\
\leq& \tilde C \left(\left\|\bigg[\begin{array}{c}{\bm x^{k+1}-\bm x^{k}} \\ {\bm w^{k+1}-\bm w^{k}}\end{array}\bigg]\right\|_{2}+\|\eta^{k+1}_{\bm x}-\eta^{k}_{\bm x}\|_{2}\right),
\end{aligned}
\end{equation*}
where $\tilde C:=C_{0}(1+\frac{1}{\beta\sqrt{a_{1}}}\frac{1}{\sqrt{a_{1}}})+\beta\sum_{i=1}^{n^2}\|D_{i}\|_{2}$ is a constant.
\end{proof}

Now we give the proof to Theorem \ref{convergence theorem}.
\begin{proof}[Proof of Theorem \ref{convergence theorem}]
By Lemma \ref{lemma2}, the sequence $\{(\bm y^k, \bm x^k, \bm w^k, \bm \lambda^k)\}$ is bounded, so there exists a convergent subsequence $\{(\bm y^{n_{k}}, \bm x^{n_{k}}, \bm w^{n_{k}}, \bm \lambda^{n_{k}})\}$, i.e., $\{(\bm y^{n_{k}}, \bm x^{n_{k}}, \bm w^{n_{k}}, \bm \lambda^{n_{k}})\}$ converges to $(\bm y^{\ast}, \bm x^{\ast}, \bm w^{\ast}, \bm \lambda^{\ast})$ as $k \rightarrow \infty$.
Notice that the residual is summable, then we know from $\|\eta^{k}_{\bm x}-\eta^{k+1}_{\bm x}\|_{2}\leq \|\eta^{k}_{\bm x}\|_{2}+\|\eta^{k+1}_{\bm x}\|_{2}$ that when $k\rightarrow \infty$, $\|\eta^{k}_{\bm x}-\eta^{k+1}_{\bm x}\|_{2}\rightarrow 0$. Hence, for sufficiently large k, we have
$L_{\beta}(\bm y^{k}, \bm x^{k}, \bm w^{k}, \bm \lambda^{k})$ is nonincreasing and lower-bounded. We derive that when $k\rightarrow \infty$,\begin{equation*}
\mathcal{L}_{\beta}(\bm y^k, \bm x^k, \bm w^k, \bm \lambda^k)-\mathcal{L}_{\beta}(\bm y^{k+1}, \bm x^{k+1}, \bm w^{k+1}, \bm \lambda^{k+1})\rightarrow 0.
\end{equation*}
Then we know from the proof of Lemma \ref{lemma2} that when $k\rightarrow \infty$,
\begin{equation*}
\begin{aligned}
&\sum_{i=1}^{n^2}\|\mathbf y_{i}^{k}-\mathbf y_{i}^{k+1}\|_{2}^{2}\rightarrow 0,\\
&\left\|\bigg[\begin{array}{c}{\bm x^{k+1}-\bm x^{k}} \\ {\bm w^{k+1}-\bm w^{k}}\end{array}\bigg]\right\|_{2}^{2}\rightarrow 0.
\end{aligned}
\end{equation*}
Then we know from Lemma \ref{lemma3} that there exists $p^{k+1}\in \partial \mathcal{L}_{\beta}(\bm y^{k+1}, \bm x^{k+1}, \bm w^{k+1}, \bm \lambda^{k+1})$ such that when $k\rightarrow \infty$,
$\|p^{k+1}\|_{2}\rightarrow 0$. This leads to $\|p^{n_{k}}\|_{2}\rightarrow 0$ as $k\rightarrow \infty$.
Based on the definition of the general subgradient in \cite{Rockafellar2009}, we obtain that $0\in \partial \mathcal{L}_{\beta}(\bm y^{\ast}, \bm x^{\ast}, \bm w^{\ast}, \bm \lambda^{\ast})$, i.e., $(\bm y^{\ast}, \bm x^{\ast}, \bm w^{\ast}, \bm \lambda^{\ast})$ is a stationary point.
\end{proof}

\subsection{Complexity analysis}
Now we consider the computational complexity of Algorithm \ref{alg:ADMM-LAP} for (\ref{eq:objFctn*}). In Step 1, the main computational costs come from computing $D_{i}\bm x^{k}$ and $\frac{1}{\beta}\left(\bm \lambda^{k}\right)_{i}$. Since $D_{i}$ denotes the first-order finite difference of $\bm x$ at the $i$th pixel, computing $D_{i}\bm x^{k},\ i=1,\dots,n^2$ requires $2n^{2}$ flops. Computing the scalar-vector product $\frac{1}{\beta}\left(\bm \lambda^{k}\right)_{i},\ i=1,\dots,n^2$ requires $2n^{2}$ flops. Finally, computing the 2-norm of the vectors $D_{i}\bm x^{k}+\frac{1}{\beta}\left(\bm \lambda^{k}\right)_{i}\in \mathbb{R}^2,\ i=1,\dots,n^2$ needs  $6n^{2}$ flops. Thus, Step 1 costs $O\left(n^{2}\right)$.

In Step 2, we first consider the computational cost of \eqref{eq:rdp1*}. Suppose we use the conjugate gradient method to solve for $\delta \mathbf{x}_{\mathcal{I}}$ and set the maximum iteration number to a small constant. Then the cost is determined by the cost of multiplying $\mathbf {\hat J}_{\bm x}^{\top}(\bm I - \mathbf {\hat J}_{\bm w} (\mathbf {\hat J}_{\bm w}^{\top} \mathbf {\hat J}_{\bm w} + \nabla^2_{\bm w} {\hat S}(\bm w))^{-1} \mathbf {\hat J}_{\bm w}^{\top}) \mathbf {\hat J}_{\bm x} + \frac{1}{\mu} \nabla^2_{\bm x} {\hat R}\left(\bm x, \bm y^{k+1}, \bm \lambda^k\right)$ on a vector and computing
$-\mathbf {\hat J}_{\bm x}^{\top}(\bm I - \mathbf {\hat J}_{\bm w} (\mathbf {\hat J}_{\bm w}^{\top} \mathbf {\hat J}_{\bm w} + \nabla^2_{\bm w} {\hat S}(\bm w))^{-1} \mathbf {\hat J}_{\bm w}^{\top}) \bm r$ and $\mathbf {\hat J}_{\bm x}^{\top} \mathbf {\hat J}_{\bm w} (\mathbf {\hat J}_{\bm w}^{\top} \mathbf {\hat J}_{\bm w} + \nabla^2_{\bm w} {\hat S}(\bm w))^{-1} \nabla_{\bm w} {\hat S}(\bm w)$. Since ${\mathbf{J}}_{\bm x} = A\left(\bm w\right)^\top$ and each PSF matrix $A_i$ allows the use of fast Fourier transforms (FFTs) \cite{Xi2014, Xia2012} to multiply it with a vector, the cost of multiplying $\hat{\mathbf{J}}_{\bm x}$ on a vector is $O\left(n^2 \log n\right)$. Similarly, the cost of multiplying $\hat{\mathbf{J}}_{\bm w}$ on a vector is also $O\left(n^2 \log n\right)$ by FFTs. So the cost of the matrix-vector product of $\mathbf {\hat J}_{\bm x}^{\top}(\bm I - \mathbf {\hat J}_{\bm w} (\mathbf {\hat J}_{\bm w}^{\top} \mathbf {\hat J}_{\bm w} + \nabla^2_{\bm w} {\hat S}(\bm w))^{-1} \mathbf {\hat J}_{\bm w}^{\top}) \mathbf {\hat J}_{\bm x} $ and a vector is $O\left(n^2 \log n\right)$. Computing the matrix-vector product of $\nabla^2_{\bm x} {\hat R}\left(\bm x, \bm y^{k+1}, \bm \lambda^k\right)$ and a vector equals to computing the matrix-vector product of $\beta D^{T}D$ and a vector, and it can be done in $O\left(n^2\right)$. Moreover, $-\mathbf {\hat J}_{\bm x}^{\top}(\bm I - \mathbf {\hat J}_{\bm w} (\mathbf {\hat J}_{\bm w}^{\top} \mathbf {\hat J}_{\bm w} + \nabla^2_{\bm w} {\hat S}(\bm w))^{-1} \mathbf {\hat J}_{\bm w}^{\top}) \bm r$ and $\mathbf {\hat J}_{\bm x}^{\top} \mathbf {\hat J}_{\bm w} (\mathbf {\hat J}_{\bm w}^{\top} \mathbf {\hat J}_{\bm w} + \nabla^2_{\bm w} {\hat S}(\bm w))^{-1} \nabla_{\bm w} {\hat S}(\bm w)$ can be performed in $O\left(n^2 \log n\right)$ by FFTs. Thus the computational cost of (\ref{eq:rdp1*}) is $O\left(n^2 \log n\right)$. Next we consider the cost of \eqref{delta w}. The main computational costs in \eqref{delta w} are from computing the matrix-vector product ${\hat{\mathbf{J}}}_{\bm w}^{\top}\hat{\mathbf{J}}_{\bm x} \delta \bm x_{\mathcal{I}}$ and $\hat{\mathbf{J}}_{\bm w}\bm r$. Since these matrix-vector products can also be accelerated by FFTs, the computational cost of (\ref{delta w}) is $O\left(n^2 \log n\right)$. For (\ref{equ:activeset}), the main computational costs are from computing ${\tilde{\mathbf{J}}_{\bm x}^{\top} \bm r}$, ${\tilde{\mathbf{J}}_{\bm w}^{\top} \bm r}$. Similar to the analysis for (\ref{delta w}), the costs of computing ${\tilde{\mathbf{J}}_{\bm x}^{\top} \bm r}$ and ${\tilde{\mathbf{J}}_{\bm w}^{\top} \bm r}$ are $O\left(n^2 \log n\right)$ due to FFTs. The scalar-vector product $-\mu\left[\begin{array}{c}{\tilde{\mathbf{J}}_{\bm x}^{\top} \bm r} \\ {\tilde{\mathbf{J}}_{\bm w}^{\top} \bm r}\end{array}\right]$ can be done in $n^2+p$ flops.
In applications, we usually choose the image size to be $n\times n = 256\times 256$ and the number of the unknown weights parameterizing the blurring matrix is usually chosen to be $p=2, 3$ or some small values much smaller than $n^2$, so we always have $p \ll n^2$. Thus, the cost of (\ref{equ:activeset}) is $O\left(n^2 \log n\right)$.
In (\ref{equ:step}), computing the scalar-vector multiplication
$\gamma\left[\begin{array}{c}{\delta \bm x_{\mathcal{A}}} \\ {\delta \bm w_{\mathcal{A}}}\end{array}\right]$
requires $n^2+p$ flops and computing the summation of the vectors $\left[\begin{array}{c}{\delta \bm x_{\mathcal{I}}} \\ {\delta \bm w_{\mathcal{I}}}\end{array}\right]$ and $\gamma\left[\begin{array}{c}{\delta \bm x_{\mathcal{A}}} \\ {\delta \bm w_{\mathcal{A}}}\end{array}\right]$ also requires $n^2+p$ flops. Hence, the cost of (\ref{equ:step}) is $O(n^2)$.
The cost of (\ref{equ:armijo}) is mainly from forming $\nabla_{\bm x, \bm w} \hat{\Phi}\left(\bm x, \bm w, \bm y^{k+1}, \bm \lambda^{k}\right)$ and computing the inner product $\left(\nabla_{\bm x, \bm w} \hat{\Phi}\left(\bm x, \bm w, \bm y^{k+1}, \bm \lambda^{k}\right)\right)^{\top}\left[\begin{array}{c}{\delta \bm x} \\ {\delta \bm w}\end{array}\right]$. We know that $\nabla_{\bm x, \bm w} \hat{\Phi}\left(\bm x, \bm w, \bm y^{k+1}, \bm \lambda^{k}\right)=\left[\begin{array}{c}{A(\bm w)^{T}(A(\bm w)\bm x-\bm d)} \\ {{\mathbf{\hat J}}_{\bm w}^{T}(A(\bm w)\bm x-\bm d)}\end{array}\right]$, so the cost of forming $\nabla_{\bm x, \bm w} \hat{\Phi}\left(\bm x, \bm w, \bm y^{k+1}, \bm \lambda^{k}\right)$ is $O\left(n^2 \log n\right)$ by FFTs, and computing the inner product $\left(\nabla_{\bm x, \bm w} \hat{\Phi}\left(\bm x, \bm w, \bm y^{k+1}, \bm \lambda^{k}\right)\right)^{\top}\left[\begin{array}{c}{\delta \bm x} \\ {\delta \bm w}\end{array}\right]$ costs $O\left(n^2\right)$. Thus the computational cost of Step 2 is $O\left(n^2\log n\right)$.

In Step 4, the computational cost is dominated by the cost of computing $D\bm x^{k+1}$ and forming $\beta\bm y^{k+1}$ and $\beta D\bm x^{k+1}$. It is easy to see that these operations cost $O\left(n^{2}\right)$. Therefore, the total computational complexity of Algorithm \ref{alg:ADMM-LAP} is $O\left(n^{2}\log n\right)$.

\subsection{Comparison method}
In this section, we briefly discuss a benchmark method ADMM-BCD of ADMM-LAP used in the numerical comparisons in Section \ref{sec:num}.

BCD in ADMM-BCD stands for the block coordinate descent (BCD) method \cite{Nocedal1999, Xu2013}, which is another popular approach for solving coupled optimization problems. The main idea of BCD is partitioning the optimization variables into a number of blocks, then minimize the objective function cyclically over each block while fixing the remaining blocks at their last updated values until convergence. For AO retinal image problems we consider in this paper, the variables can be naturally separated into two subsets, one for the image variable $\bm x$ and another for the parameters $\bm w$. For the tightly coupled $\left(\bm x,\bm w\right)$-subproblem
\begin{equation*}
\underset{\bm x \in \mathcal{C}_{\bm x}, \bm w \in {{\mathcal{C}}}_{\bm w}}{\min}\frac{\mu}{2} \| A\left(\bm w\right) \bm x - \bm d \|_2^2+ S(\bm w)+R\left(\bm x, \bm y^{k+1},\bm \lambda^{k}\right),
\end{equation*}
given the initial point $\left(\bm x^{k},\bm w^{k}\right)$, the iterative format of BCD is as follows
\begin{align}
\label{xsubp}
&\bm x^{(l+1)}=\underset{\bm x \in \mathcal{C}_{\bm x}}{\operatorname{argmin}} \frac{\mu}{2} \| A\left(\bm w^{(l)}\right) \bm x - \bm d \|_2^2+ R\left(\bm x, \bm y^{k+1},\bm \lambda^{k}\right),\\
\label{wsubp}
&\bm w^{(l+1)}=\underset{\bm w \in {\mathcal{C}}_{\bm w}}{\operatorname{argmin}} \frac{\mu}{2} \| A\left(\bm w\right) \bm x^{(l+1)} - \bm d \|_2^2+S(\bm w),
\end{align}
where $l\in \mathbb{N}$ denotes the $l$-th iteration of BCD.

For the numerical experiments, we inexactly solve $(\ref{xsubp})$ and $(\ref{wsubp})$ by the projected Gauss-Newton method with bound constraints \cite{Haber2014}. The search directions $\delta \bm x_{\mathcal{I}}$ at $\bm x^{(l)}$ can be computed as follows:
\begin{equation}
\label{xdirection}
\left(\mu{\hat{\mathbf{J}}}_{\bm x}^{\top} \hat{{\mathbf{J}}}_{\bm x} + \nabla^2_{\bm x} \hat{R}\left(\bm x^{(l)}, \bm y ^{k+1},\bm \lambda ^{k}\right)\right) \delta \bm x_{\mathcal{I}}
= -\left(\mu\hat{{\mathbf{J}}}_{\bm x}^{\top} \bm r(\bm x^{(l)},\bm w^{(l)}) +\nabla_{\bm x} \hat{R}\left(\bm x^{(l)}, \bm y ^{k+1},\bm \lambda ^{k}\right)\right),
\end{equation}
where $\hat{\mathbf{J}}_{\bm x}$, $\nabla_{\bm x} \hat{R}$ and $\nabla^2_{\bm x} {\hat R}$ represent ${\mathbf{J}}_{\bm x}$, $\nabla_{\bm x} R$ and $\nabla^2_{\bm x} {R}$ restricted to the inactive set via projection, respectively. Moreover, $\delta \bm x_{\mathcal{A}}$ is given by the projected gradient descent step and $\delta \bm x$ can be computed by a scaled combination \cite{Haber2014}. Finally, the projected Armijo line search is applied to find the solution $\bm x^{(l+1)}$.  Then the search directions $\delta \bm w_{\mathcal{I}}$ at $\bm w^{(l)}$ can be computed as follows:
\begin{equation}
\label{wdirection}
\left(\hat{{\mathbf{J}}}_{\bm w}^{\top} \hat{{\mathbf{J}}}_{\bm w}+\nabla_{\bm w}^{2} S(\bm w)\right) \delta \bm w_{\mathcal{I}}=-\hat{{\mathbf{J}}}_{\bm w}^{\top}\bm r(\bm x^{(l+1)},\bm w^{(l)})+\nabla_{\bm w} S(\bm w),
\end{equation}
where $\hat{\mathbf{J}}_{\bm w}$, $\nabla_{\bm w} \hat S(\bm w)$ and $\nabla_{\bm w}^{2} \hat S(\bm w)$ represent ${\mathbf{J}}_{\bm w}$, $\nabla_{\bm w} S(\bm w)$ and $\nabla_{\bm w}^{2} S(\bm w)$ restricted to the inactive set via projection, respectively. Similar to the above discussion, we can obtain $\delta \bm w_{\mathcal{A}}$ by projected gradient descent and obtain $\delta \bm w$ by a scaled combination. Finally, the solution $\bm w^{(l+1)}$ can be obtained by the projected Armijo line search.

For $(\ref{xdirection})$, due to the large dimension, the search direction can be inexactly computed by the conjugate gradients method. For $(\ref{wdirection})$, because the dimension is small, the search direction can be computed by a direct solver.
Moreover, we note that BCD is fully decoupled while optimizing over one set of variables, and the optimization over another set of variables is neglected. This degrades the convergence for tightly coupled problems\cite{Nocedal1999}.

Similar to ADMM-LAP discussed in the previous section, ADMM-BCD uses ADMM as the outer optimization method to tackle the TV regularization and applies BCD to tackle the related $(\bm x, \bm w)$-subproblems appearing in each ADMM iteration. The main operations of ADMM-BCD are summarized in Algorithm \ref{alg:ADMM-BCD}.

\begin{algorithm}[H]
\caption{ADMM-BCD method for (\ref{eq:objFctn*})}
\label{alg:ADMM-BCD}
\normalsize
\begin{algorithmic}
\STATE{\textbf{Input:}} $\left(\bm y^{0},\bm x^{0},\bm w^{0},{\bm \lambda}^{0}\right)\in \mathbb{R}^{2n^2}\times \mathbb{R}^{n^2}\times \mathbb{R}^{p}\times\mathbb{R}^{2n^2}$, the penalty parameter $\beta>0$. Set $k=0$.\STATE{\textbf{Output:}} $\bm y^{k},\bm x^{k},\bm w^{k},{\bm \lambda}^{k}$.
\STATE{\textbf{Step 1}} Compute $\bm y^{k+1}$ using (\ref{ysubproblem}).
\STATE{\textbf{Step 2}} Compute $\bm x^{k+1}$, $\bm w^{k+1}$ by  iteratively solving (\ref{xsubp}) and (\ref{wsubp}).
\STATE{\textbf{Step 3}} If a termination criteria is met, stop and go to Step 4; else, repeat Step 2.
\STATE{\textbf{Step 4}} Compute $\bm \lambda^{k+1}$ using (\ref{eq:lambda}).
\STATE{\textbf{Step 5}} If a termination criteria is met, stop; else, set $k:=k+1$ and go to Step 1.
\end{algorithmic}
\end{algorithm}

The complexity analysis of Algorithm \ref{alg:ADMM-BCD} is presented as follows. First notice that Step 1 and Step 4 in Algorithm \ref{alg:ADMM-BCD} are identical to those in Algorithm \ref{alg:ADMM-LAP},  thus these two steps cost $O\left(n^{2}\right)$.

For Step 2 in Algorithm \ref{alg:ADMM-BCD}, we first analyze the computational cost of (\ref{xsubp}). To compute $\delta \bm x_{\mathcal{I}}$, we use the conjugate gradient method and set the maximum iteration number to a small constant. The main computational cost is from computing the multiplication of $\mu{\hat{\mathbf{J}}}_{\bm x}^{\top} {\hat{\mathbf{J}}}_{\bm x} + \nabla^2_{\bm x} {\hat R}\left(\bm x, \bm y^{k+1}, \bm \lambda^k\right)$ and a vector, and computing $\hat{{\mathbf{J}}}_{\bm x}^{\top} \bm r(\bm x^{(l)},\bm w^{(l)})$. Similar to the complexity analysis in Algorithm \ref{alg:ADMM-LAP}, this matrix-vector product can be done in $O\left(n^2 \log n\right)$ by FFTs. Then $\delta \bm x_{\mathcal{A}}$, $\delta \bm x$ can be computed in a similar way as discussed in Algorithm \ref{alg:ADMM-LAP} and a projected Armijo line search is applied to find the solution. The cost of these steps is $O\left(n^2 \log n\right)$. Hence, the total computational cost of (\ref{xsubp}) is {$O\left(n^{2}\log n\right)$}. As for the computational cost of (\ref{wsubp}), the main computational costs of computing $\delta \bm w_{\mathcal{I}}$ are from computing $\hat{\mathbf{J}}_{\bm w}^{\top}\hat{\mathbf{J}}_{\bm w}$ and ${\hat{\mathbf{J}}_{\bm w}^{\top} \bm r(\bm x^{(l+1)},\bm w^{(l)})}$. We know from the definition (\ref{jw}) that $\hat{\mathbf{J}}_{\bm w}^{\top}$ is a $p\times n^2$ matrix, $p\ll n^2$, hence the cost of computing $\hat{\mathbf{J}}_{\bm w}^{\top}\hat{\mathbf{J}}_{\bm w}$ is $O\left(n^{2}\right)$. The matrix-vector multiplication $\hat{\mathbf{J}}_{\bm w}^{\top}\bm r(\bm x^{(l+1)},\bm w^{(l)})$ can be done in $O\left(n^2 \log n\right)$ by FFTs. Moreover, the cost of computing $\delta \bm w_{\mathcal{A}}$, $\delta \bm w$ and applying the projected Armijo line search is $O\left(n^2 \log n\right)$. Therefore, the cost of (\ref{wsubp}) is $O\left(n^{2}\log n\right)$. As a result, the total computational cost of  Algorithm \ref{alg:ADMM-BCD} is {$O\left(n^{2}\log n\right)$.}

As shown from numerical experiments from Section \ref{sec:num}, Algorithm \ref{alg:ADMM-BCD} takes longer time than that of Algorithm \ref{alg:ADMM-LAP}. This is mainly because BCD requires computing the matrix-vector product $\hat{\mathbf{J}}_{\bm x}^{\top}\bm r(\bm x^{(l)},\bm w^{(l)})$ and $\hat{\mathbf{J}}_{\bm w}^{\top}\bm r(\bm x^{(l+1)},\bm w^{(l)})$ in each iteration, while LAP only requires computing $\hat{\mathbf{J}}_{\bm w}^{\top}\bm r(\bm x^{(l)},\bm w^{(l)})$. Hence, BCD takes more computational costs to compute the current residual value $\bm r$ and the matrix-vector multiplication $\hat{\mathbf{J}}_{\bm w}^{\top}\bm r$, which results in a larger number of FFTs. Moreover, the line search is applied twice in BCD to solve the search directions for both $\bm x$ and $\bm w$, while in LAP, only one line search is enough to obtain the search directions for both variables. Hence, Algorithm \ref{alg:ADMM-BCD} costs more. Moreover, it is important to point out that Algorithm \ref{alg:ADMM-LAP} takes the structure of the tightly coupled problem into consideration and converges faster.

\section{Numerical experiments}
\label{sec:num}
In this section, we illustrate the numerical performance of the proposed ADMM-LAP method for the myopic deconvolution problems with TV regularization arising from the AO retinal image restoration. All our computational results are obtained by MATLAB R2018b running on a Macbook Air with Intel Core i5 CPU (1.4 GHz).

The following notations will be used throughout the section:
\begin{itemize}
\item BlurLevel: an indicator used to set the severity of the blur to one of the following: `mild', `medium' and `severe';
\item $\text{NoiseLevel} \triangleq \|\bm{e}\|_{2} /\|\bm{d}^*\|_{2}$: relative level of noise, where $\bm e$ denotes the noise vector of perturbations and $\bm{d}^*$ denotes the exact (noise free) data vector;
\item $\# \rm iter$: the number of iterations performed by the algorithm;
\item ${\rm Rel. \ Err.} \ {\bm x}$: the relative error $\left\|\bm x-\bm x^{*}\right\|_{2} /\left\|\bm x^{*}\right\|_{2}$, where $\bm x^{*}$ and $\bm x$ denote the true image and computed image, respectively;
\item ${\rm Rel. \ Err.} \ {\bm w}$: the relative error $\left\| \bm w- \bm w^{*}\right\|_{2}/\left\| \bm w^{*}\right\|_{2}$, where $\bm w^{*}$ and $\bm w$ denote the true parameters and computed parameters, respectively;
\item $\operatorname{SNR}\left(\bm x\right) \triangleq 10 \cdot \log _{10} \frac{\|\overline{\bm x}-\tilde{\bm x}\|^{2}_{2}}{\|\overline{\bm x}-\bm x\|^{2}_{2}}$: signal-to-noise ratio (SNR), where $\overline{\bm x}$ is the original image, $\tilde{\bm x}$ is the mean intensity value of $\overline{\bm x}$ and $\bm x$ denotes the restored image. $\operatorname{SNR}\left(\bm x\right)$ measures the quality of restoration.
\end{itemize}

In all test problems, we set the regularization parameter $\mu=5\cdot 10^{4}$ by trial and errors such that the restored images had reasonable signal-to-noise ratio (SNR) and relative errors. The parameter $\xi$ was set to $\xi = 100$ such that the obtained parameters satisfied $\sum_{j = 1}^p w_j= 1$. The parameter $\beta$ was chosen according to the assumptions in Lemma \ref{lemma2}. In order to guarantee the sequence $\lbrace\epsilon_{k+1}\rbrace_{k=0}^{\infty}\subseteq[0,+\infty)$ satisfies $\sum_{k=0}^{\infty}\epsilon_{k+1}<\infty$, we choose $\epsilon_{k+1}=\frac{1}{a(k+1)^{2}}$, where $a$ is a constant. We used one uniform stopping criterion, that is
\begin{equation*}
\frac{|\hat{\Phi}(\bm x^{k+1}, \bm w^{k+1}, \bm y^{k+1},\bm \lambda^{k+1})-\hat{\Phi}(\bm x^{k}, \bm w^{k}, \bm y^{k},\bm \lambda^{k})|}{|\hat{\Phi}(\bm x^{k}, \bm w^{k}, \bm y^{k},\bm \lambda^{k})|}<\epsilon,
\end{equation*}
where $\hat{\Phi}(\bm x^{k}, \bm w^{k}, \bm y^{k},\bm \lambda^{k})$ is the objective function value in the $k$-th iteration and
$ \epsilon>0$ is a given tolerance.
In this paper, we set $\epsilon = 10^{-2}$. The maximum number of ADMM iterations was set to 50.
We also report the numerical results obtained by running the ADMM-BCD method. We compare the relative error of the image $\bm x$ and the relative error of the parameter $\bm w$ estimated by both ADMM-LAP and ADMM-BCD as well as their computational time and SNR of the restored images.

\begin{example}
\label{example1}
\rm
Adaptive optics (AO) flood illumination retinal imaging is a popular technique which has been in use for more than a decade\cite{Blanco2011, Blanco2014}. AO retinal imaging technique gives us the opportunity to observe and study retinal structures at the cellular level, which is impossible to see directly in the living eye. Due to the poor contrast of the raw AO retinal images, interpretation of such images requires the myopic deconvolution. For this example, we consider the case $p=2$. The global PSF used in the simulation is the sum of  two PSFs with the first one being focused and the second one being defocused. Let $A_1$, $A_2$ denote the relative blurring marices.
The test problem is a simulation generated from a $256\times 256$ pixel portion of an actual
AO retinal image.

We use the regularization toolbox IR Tools \cite{Gazzola2019} to build up the test problem.  \emph{PRblurgauss} and \emph{PRblurdefocus} are functions  used to simulate spatially invariant Gaussian blur and spatially invariant out-of-focus blurs, respectively.
Specifically,
\emph{PRblurgauss} constructs a $256 \times 256$ PSF array $P$ with entries
$$
    p_{ij} = \frac{1}{2\pi\sigma} \exp \left( - \frac{1}{2}
    \frac{(i-k)^2}{\sigma^2} +  \frac{(j-\ell)^2}{\sigma^2}\right)
$$
for a given value of $\sigma$, and
\emph{PRblurdefocus} constructs a $256 \times 256$ PSF array $P$ with entries
$$
    p_{ij} = \left\{ \begin{array}{ll}
      1/(\pi r^2), &  \mathrm{if} \ (i-k)^2+(j-\ell)^2 \leq r^2, \\[1mm]
      0, & \mathrm{elsewhere} \end{array} \right.
$$
for a given value of $r$.

In this example, three cases are considered. We fix one of the PSFs to be built up by \emph{PRblurgauss} with `mild' BlurLevel
($\sigma = 2$)
and set the other as a combined PSF built up by \emph{PRblurgauss} with `mild' BlurLevel and \emph{PRblurdefocus} with three different BlurLevels
(i.e., `mild' ($r = 7$), `medium' ($r = 15$), `severe' ($r = 31$)).
In addition, function \emph{PRnoise} is used to add Gaussian noise with NoiseLevel = 0.01. We set the true parameter $\bm w^{\ast}= [0.3; 0.7]$, choose the random image with the same size of the true image as the initial guess $\bm x^{0}$ and the initial guess $\bm {w}^0=[0.5; 0.5]$.

We then test ADMM-LAP and ADMM-BCD for the 2-weight case of (\ref{eq:objFctn1}) on this image.
For the case with the BlurLevel of \emph{PRblurdefocus} being set to be `medium', the restored images by both methods are shown in Figure {\ref{fig:3}}. We can clearly see that the restored image obtained by ADMM-LAP is much sharper, hence has a much better contrast than the one obtained by ADMM-BCD. Moreover, the restored image is much closer to the true image.

\begin{figure}[H]
\centering
\includegraphics[scale=0.5]{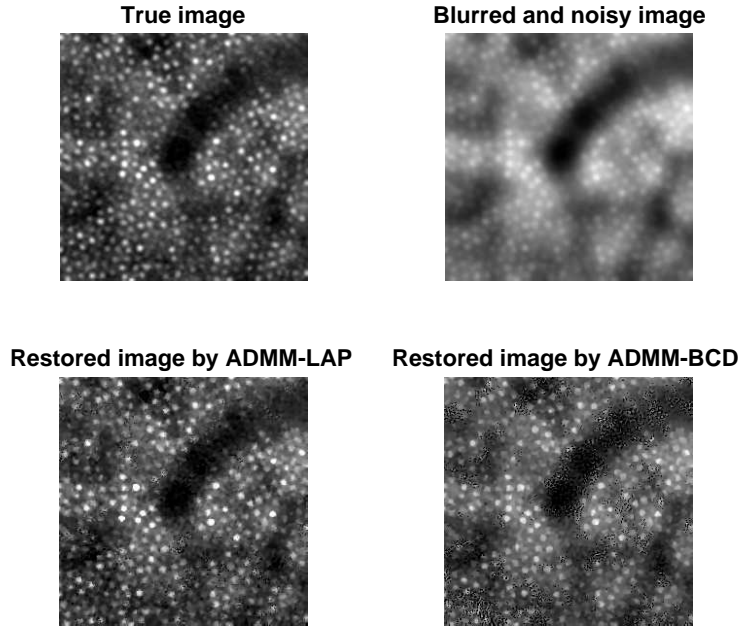}
\caption{Restored images for the retinal image in Example \ref{example1}. The BlurLevel of \emph{PRblurdefocus} is set to be `medium'. The images from the top left to the bottom right show the true image, the blurred and noisy image, the restored image by ADMM-LAP and the restored image by ADMM-BCD.}
\label{fig:3}
\end{figure}

The plots of the relative errors of the restored image $\bm x$ and the estimated parameters $\bm w$ against iteration can be seen in Figure \ref{fig:2}. It is easy to see that ADMM-LAP can reach lower relative errors than ADMM-BCD for both the restored image and the obtained parameters in this test.

\begin{figure}[H]
\centering
	\begin{subfigure}
		\centering
		\includegraphics[width=2.5in]{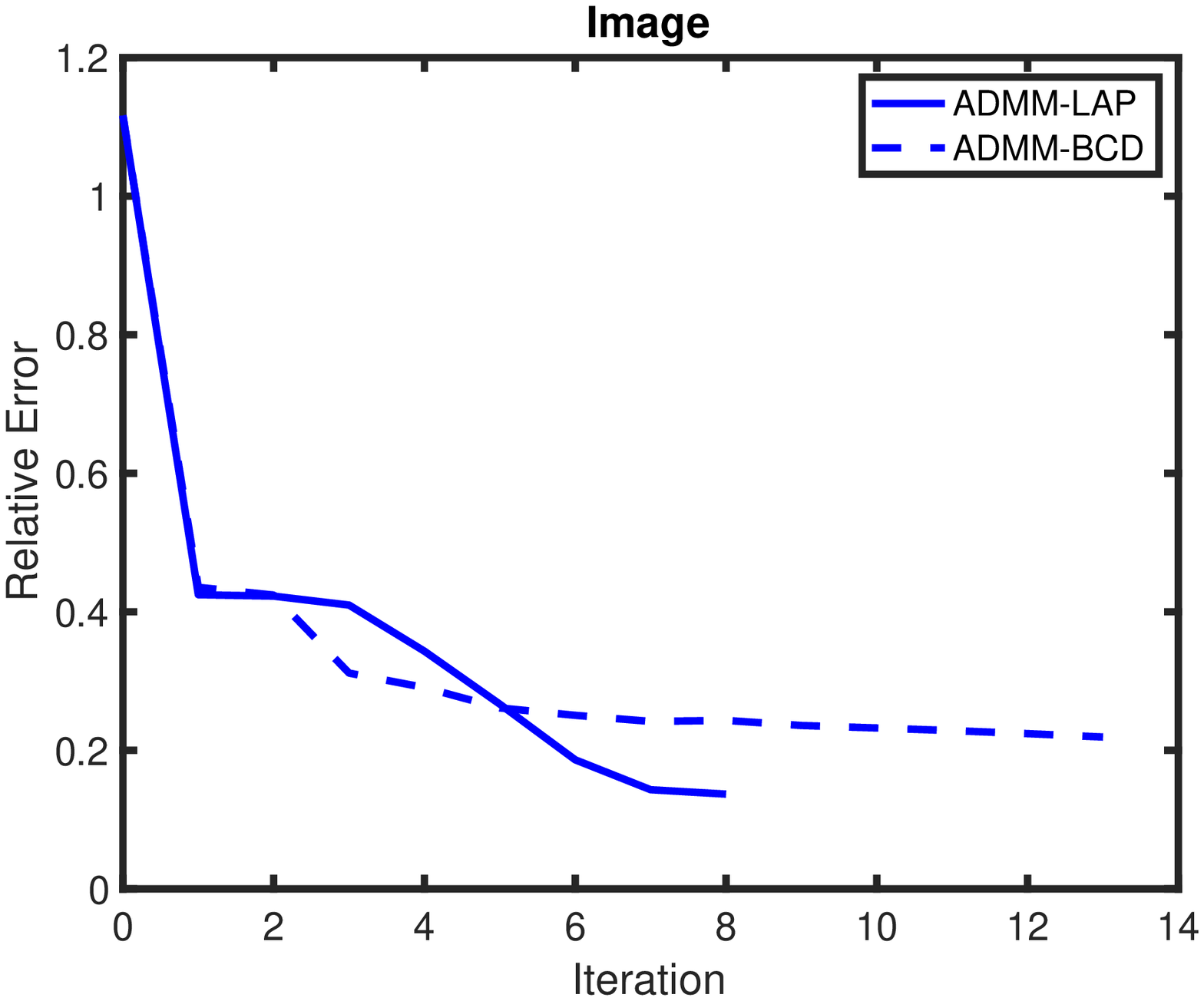}
	\end{subfigure}
	\qquad
	\begin{subfigure}
		\centering
		\includegraphics[width=2.5in]{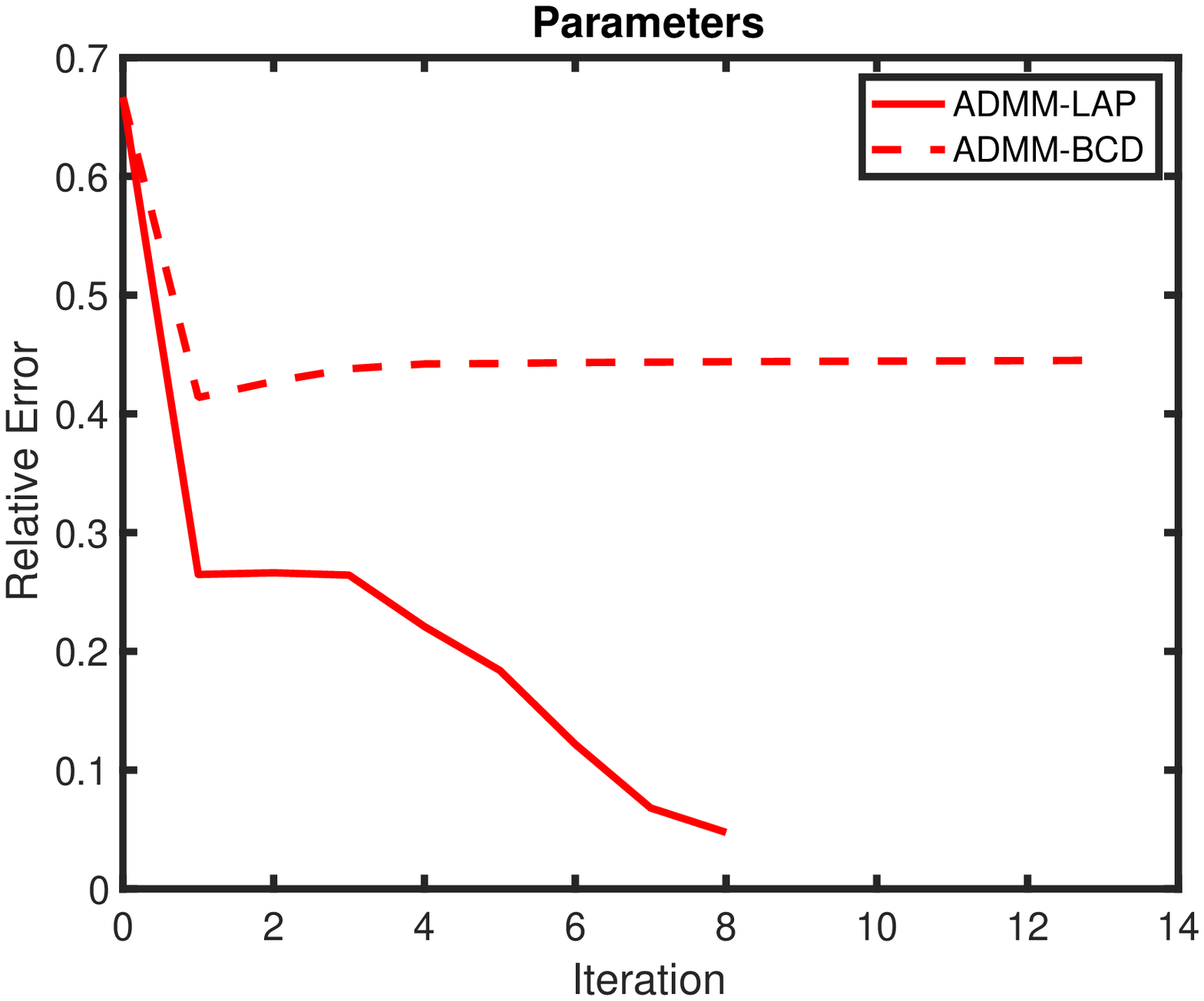}
	\end{subfigure}
	\caption{Relative errors of the restored image $\bm x$ (left) and the parameters $\bm w$ (right) for the myopic deconvolution problem in Example \ref{example1} with `medium' BlurLevel of \emph{PRblurdefocus}. }
	\label{fig:2}
\end{figure}

In Table \ref{tab:1}, we report the relative error of the restored image $\bm x$, the relative error of the parameters $\bm w$, the computational time and SNR of the restored images for both methods for all three BlurLevels. As can be seen from the fourth and fifth columns of Table \ref{tab:1}, ADMM-LAP can obtain more accurate restored images and more accurate parameters than ADMM-BCD on all three cases. The sixth column shows that ADMM-LAP is faster than ADMM-BCD, this is mainly because BCD takes a larger number of FFTs to compute the current residual value $\bm r$ and the matrix-vector multiplication $\hat{\mathbf{J}}_{\bm w}^{\top}\bm r$, and the line search is applied twice to solve the search directions for $\bm x$ and $\bm w$. Moreover, we can see from the seventh column that the quality of restored images obtained by ADMM-LAP is much better than those obtained by ADMM-BCD.
We point out that it is very difficult to analyze the difference of the relative errors of $\bm x$ and $\bm w$ between the test problems, especially because of the nonlinear relationship between $\bm x$ and $\bm w$.  We note that even for a linear problem, the quality of the error in $\bm x$ will depend not only on how ill-conditioned the matrix is, but on the distribution of singular values (e.g., is there a well-defined gap between large and small singular values, or do they decay smoothly). Our main purpose for this table of results is to compare the two methods (ADMM-LAP and ADMM-BCD) for each test problem, and to see that ADMM-LAP consistently outperforms ADMM-BCD even if the test problems change.

\begin{table}[H]
\centering
\caption{Numerical results of ADMM-LAP and ADMM-BCD for the AO retinal image in Example \ref{example1}. The columns from left to right give the BlurLevel, the method name,  the stopping iteration, the relative error of the restored image, the relative error of the solution parameters, the computational time in seconds and SNR of the restored images.}
\label{tab:1}
\begin{tabular}{ccccccc}
\noalign{\smallskip}\hline\noalign{\smallskip}
BlurLevel & Method &  $\#\rm iter$  & ${\rm Rel. \ Err.} \ {\bm x}$ & ${\rm Rel. \ Err.} \ {\bm w}$ & Time/s & SNR($\bm x$) \\
\noalign{\smallskip}\hline\noalign{\smallskip}
`mild'&ADMM-LAP&   6 &  \textbf{1.48e-01} & \textbf{2.63e-02} &\textbf{50.23} &\textbf{9.57} \\
\noalign{\smallskip}\cline{2-7}\noalign{\smallskip}
&ADMM-BCD  &  11 & 1.88e-01 & 1.11e-01 & 154.53 & 7.48\\
\noalign{\smallskip}\hline\noalign{\smallskip}
`medium'&ADMM-LAP&  8 & \textbf{1.37e-01} & \textbf{4.76e-02} &\textbf{56.42} & \textbf{10.26}\\
\noalign{\smallskip}\cline{2-7}\noalign{\smallskip}
&ADMM-BCD  &  13 & 2.19e-01 & 4.45e-01 & 145.09 &6.16\\
\noalign{\smallskip}\hline\noalign{\smallskip}
`severe'&ADMM-LAP&   12 &  \textbf{1.17e-01} & \textbf{4.39e-02} &\textbf{69.10} & \textbf{11.60} \\
\noalign{\smallskip}\cline{2-7}\noalign{\smallskip}
&ADMM-BCD  &  12 & 3.04e-01 & 9.13e-01 & 122.22 & 3.32\\
\noalign{\smallskip}\hline\noalign{\smallskip}
\end{tabular}
\end{table}
\end{example}

\begin{example}
\label{example2}
\rm
For this example, we consider the test problem from another simulation using a different $256\times 256$ portion of an actual AO retinal image with two blurring matrices. The size of the image is $256\times 256.$ We use \emph{PRblurgauss} with `mild' BlurLevel, \emph{PRblurdefocus} with three different BlurLevels (i.e., `mild', `medium', `severe') in IR Tools to build up a combined PSF, the other PSF is built up by \emph{PRblurgauss} with `mild' BlurLevel only. In addition, function \emph{PRnoise} is used to add Gaussian noise with NoiseLevel = 0.01. We set the true parameter $\bm w^{\ast}= [0.3; 0.7]$, choose the random image with the same size of the true image as the initial guess $\bm x^{0}$ and set the initial guess $\bm w^{0} = [w_{1}^{0}; w_{2}^{0}]\in \mathbb{R}^{2}$, where $w_{1}^{0}, w_{2}^{0}$ are random constants from $\left[0,1\right]$ satisfying $w_{1}^{0}+w_{2}^{0}=1$.

For the case with the BlurLevel of \emph{PRblurdefocus} being set to be `medium', the restored images by both ADMM-LAP and ADMM-BCD method are shown in Figure {\ref{fig:7}}. Like Example \ref{example1}, it is easy to see that the restored image obtained by ADMM-LAP is much sharper, and is much closer to the true image than the one obtained by ADMM-BCD.

\begin{figure}[H]
\centering
\includegraphics[scale=0.5]{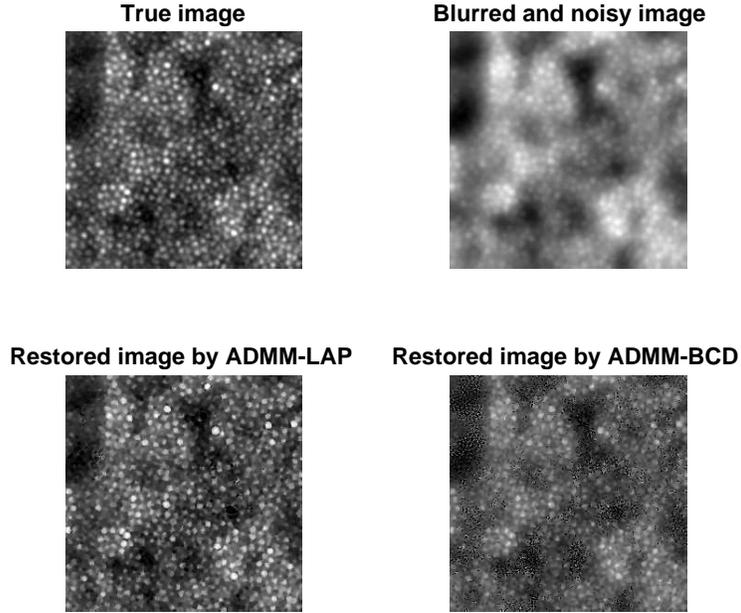}
\caption{Restored images for the retinal image in Example \ref{example2}. The BlurLevel of \emph{PRblurdefocus} is set to be `medium'. The images from the top left to the bottom right show the true image, the blurred and noisy image, the restored image by ADMM-LAP and the restored image by ADMM-BCD.}
\label{fig:7}
\end{figure}

We plot the relative errors of the restored image $\bm x$ and the estimated parameters $\bm w$ against iteration in Figure \ref{fig:8}. We can also see that ADMM-LAP reaches lower relative errors of both the restored image and the obtained parameters, hence it tends to recover more accurate restored images and parameters.

\begin{figure}[H]
\centering
	\begin{subfigure}
		\centering
		\includegraphics[width=2.5in]{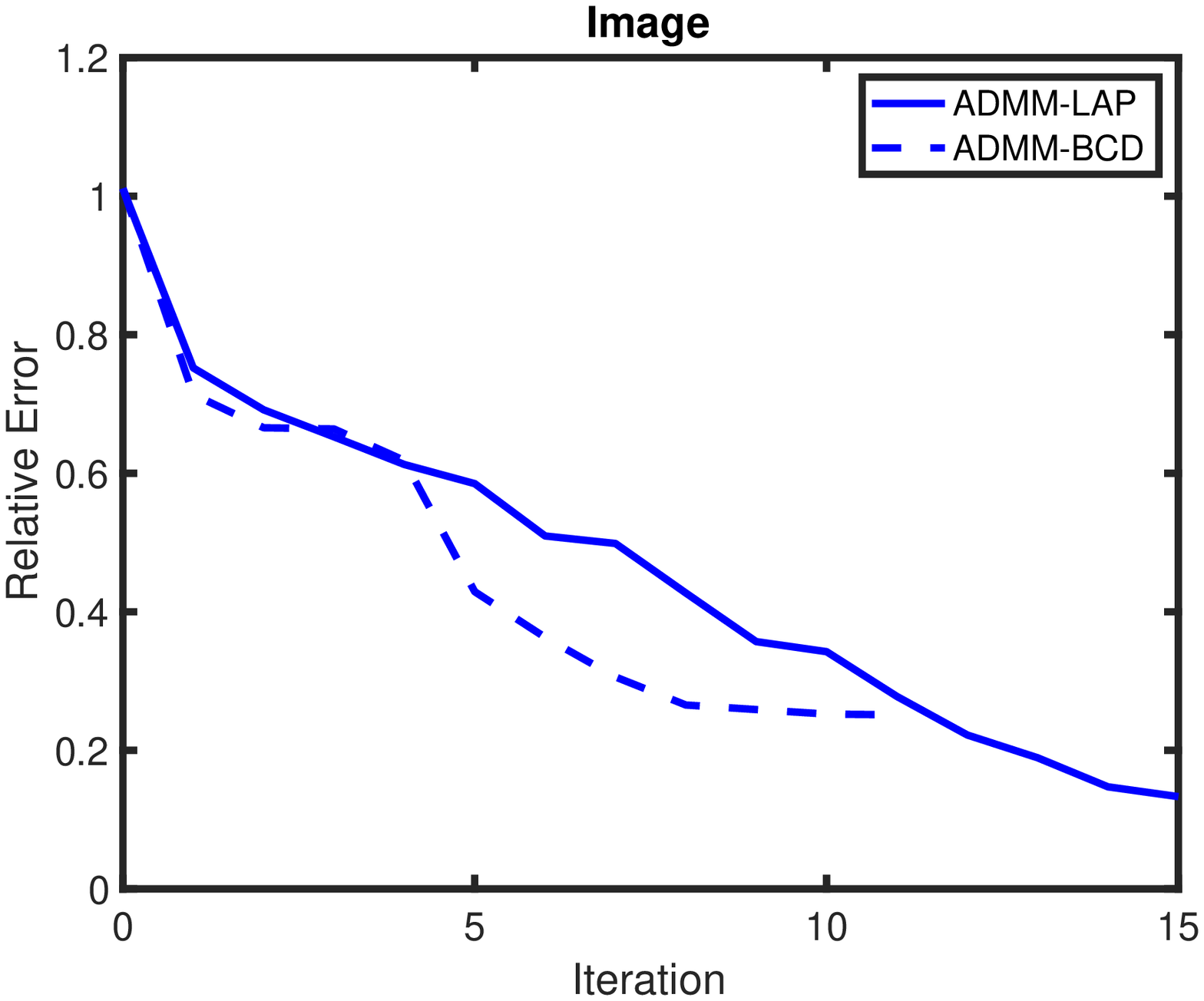}
	\end{subfigure}
	\qquad
	\begin{subfigure}
		\centering
		\includegraphics[width=2.5in]{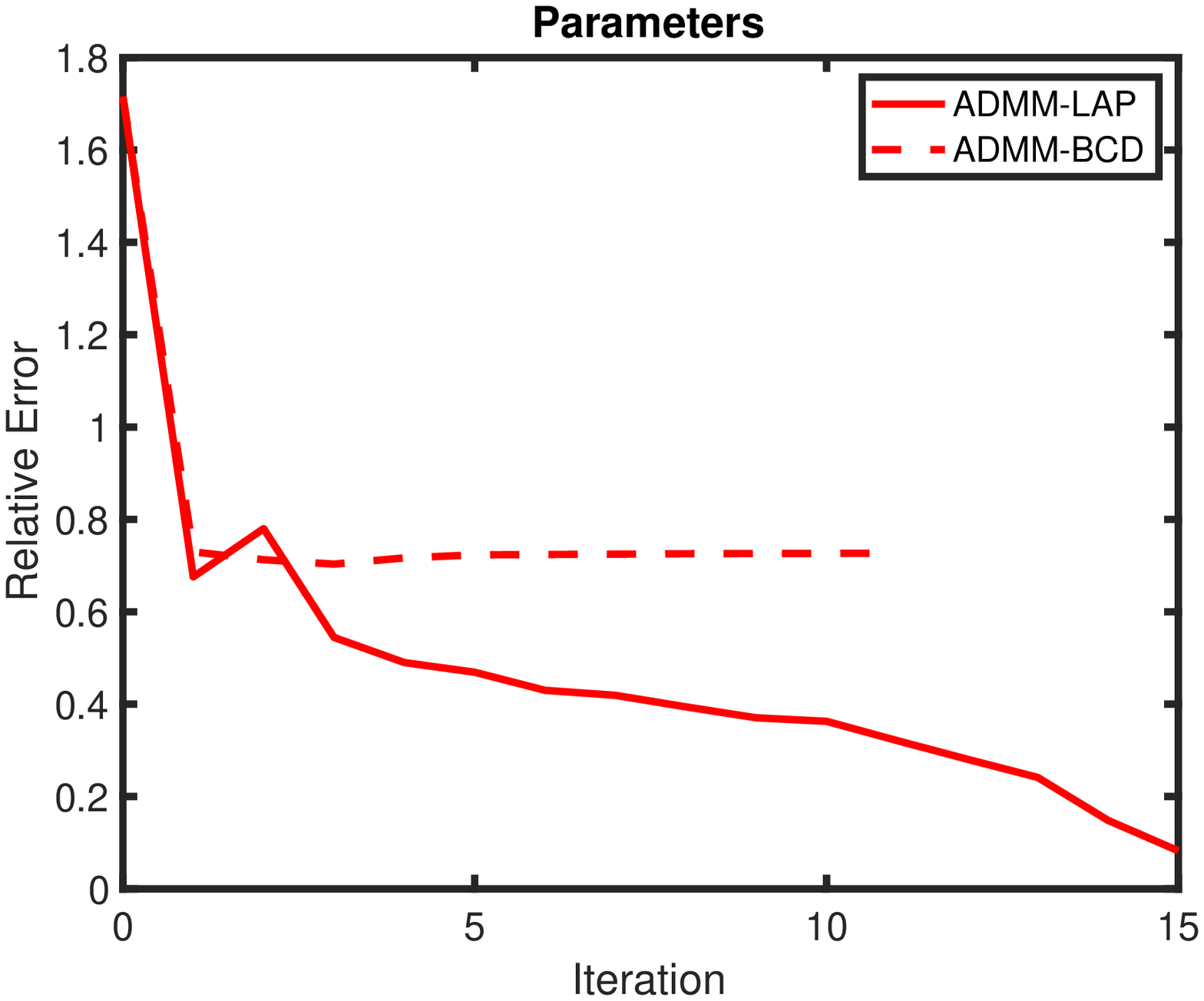}
	\end{subfigure}
	\caption{Relative errors for both the restored image $\bm x$ (left) and the parameters $\bm w$ (right) for the myopic deconvolution problem in Example \ref{example2} with `medium' BlurLevel of \emph{PRblurdefocus}.}
	\label{fig:8}
\end{figure}

Table \ref{tab:2} shows the relative error of the restored image $\bm x$, the relative error of the parameters $\bm w$, the computational time and SNR of the restored images associated with three cases for both methods. Similar to Example \ref{example1}, ADMM-LAP still outperforms ADMM-BCD in terms of the quality of the restored images and the accuracy of the obtained parameters by large margin. It can be clearly seen from the sixth column of Table \ref{tab:2} that ADMM-LAP is faster than ADMM-BCD. This is mainly because ADMM-BCD takes a larger number of FFTs and the line search is applied twice to compute the search directions, which is its computational bottleneck. The seventh column clearly shows that ADMM-LAP obtains restored images with a much better quality.
Above all, we can see that ADMM-LAP is much more efficient and accurate for this test example.

\begin{table}[H]
\centering
\caption{Numerical results of ADMM-LAP and ADMM-BCD for the retinal image in Example \ref{example2}. The columns from left to right give the BlurLevel, the method name,  the stopping iteration, the relative error of the restored image, the relative error of the solution parameters, the computational time in seconds and SNR of the restored images.}
\label{tab:2}
\begin{tabular}{cccccccc}
\noalign{\smallskip}\hline\noalign{\smallskip}
BlurLevel & Method&  $\#\rm iter$  & ${\rm Rel. \ Err.} \ {\bm x}$ & ${\rm Rel. \ Err.} \ {\bm w}$ & Time/s & SNR($\bm x$)\\
\noalign{\smallskip}\hline\noalign{\smallskip}
`mild'&ADMM-LAP& 11  &  \textbf{1.90e-01} & \textbf{1.01e-01} &\textbf{64.76} &\textbf{6.52}\\
\noalign{\smallskip}\cline{2-7}\noalign{\smallskip}
&ADMM-BCD  &  14  & 2.19e-01 & 2.74e-01 & 152.13 & 5.31\\
\noalign{\smallskip}\hline\noalign{\smallskip}
`medium'&ADMM-LAP&  15 &  \textbf{1.33e-01} & \textbf{8.23e-02} &\textbf{88.12} &\textbf{9.63}\\
\noalign{\smallskip}\cline{2-7}\noalign{\smallskip}
&ADMM-BCD  &  11 &  2.51e-01 & 7.27e-01& 119.20 &4.12\\
\noalign{\smallskip}\hline\noalign{\smallskip}
{`severe'}&ADMM-LAP&   5  &  \textbf{1.62e-01} & 7.77e-01 &\textbf{40.99} &\textbf{7.97}\\
\noalign{\smallskip}\cline{2-7}\noalign{\smallskip}
&ADMM-BCD  &  5  & 2.83e-01 & \textbf{3.76e-01} & 76.84 & 3.08 \\
\noalign{\smallskip}\hline\noalign{\smallskip}
\end{tabular}
\end{table}
\end{example}

\begin{example}
\label{example3}
\rm
In this example, we consider the case where three blurring matrices are provided, i.e., $p = 3$. The test problem is generated from the same AO retinal image used in Example \ref{example1}. The size of the image is $256\times 256.$ The first PSF is built up by \emph{PRblurgauss} with `mild' BlurLevel. The second PSF is a combined PSF built up by \emph{PRblurgauss} with `mild' BlurLevel and \emph{PRblurdefocus} with `medium' BlurLevel. The third PSF is a combined PSF built up by \emph{PRblurgauss} with `mild' BlurLevel and \emph{PRblurdefocus} with two different BlurLevels (i.e., `mild' and `severe' ). In addition, function \emph{PRnoise} is used to add Gaussian noise with NoiseLevel = 0.01. The true parameter vector $\bm w^{\ast}=[w^{\ast}_{1}; w^{\ast}_{2}; w^{\ast}_{3}]\in \mathbb{R}^{3}$ is set to be a random vector that satisfies $w^{\ast}_{i}\geq 0$ and $\sum_{i=1}^{3}w^{\ast}_{i}=1$.
We choose the random image with the same size of the true image as the initial guess $\bm x^{0}$ and set the initial guess $\bm w^0=[\frac{1}{3}; \frac{1}{3}; \frac{1}{3}]$.

The restored images by both ADMM-LAP and ADMM-BCD method are shown in Figure {\ref{fig:a}}. Like Example \ref{example1} and Example \ref{example2}, we can also see that ADMM-LAP reaches a much sharper image, it tends to recover an image that is much closer to the true image than the one obtained by ADMM-BCD.

\begin{figure}[H]
\centering
\includegraphics[scale=0.5]{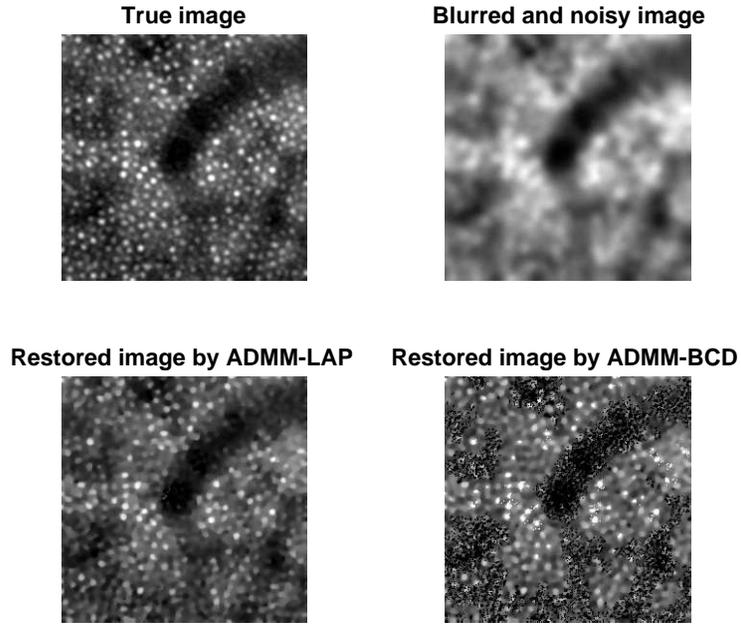}
\caption{Restored images for the retinal image in Example \ref{example3}. The BlurLevel of \emph{PRblurdefocus} in the third PSF is set to be `mild'. The images from the top left to the bottom right show the true image, the blurred and noisy image, the restored image by ADMM-LAP and the restored image by ADMM-BCD.}
\label{fig:a}
\end{figure}

We plot the relative errors of the restored image $\bm x$ and the estimated parameters $\bm w$ against iteration in Figure \ref{fig:b}. We can also see that ADMM-LAP achieve lower relative errors of both the restored image and the obtained parameters.

\begin{figure}[H]
\centering
	\begin{subfigure}
		\centering
		\includegraphics[width=2.5in]{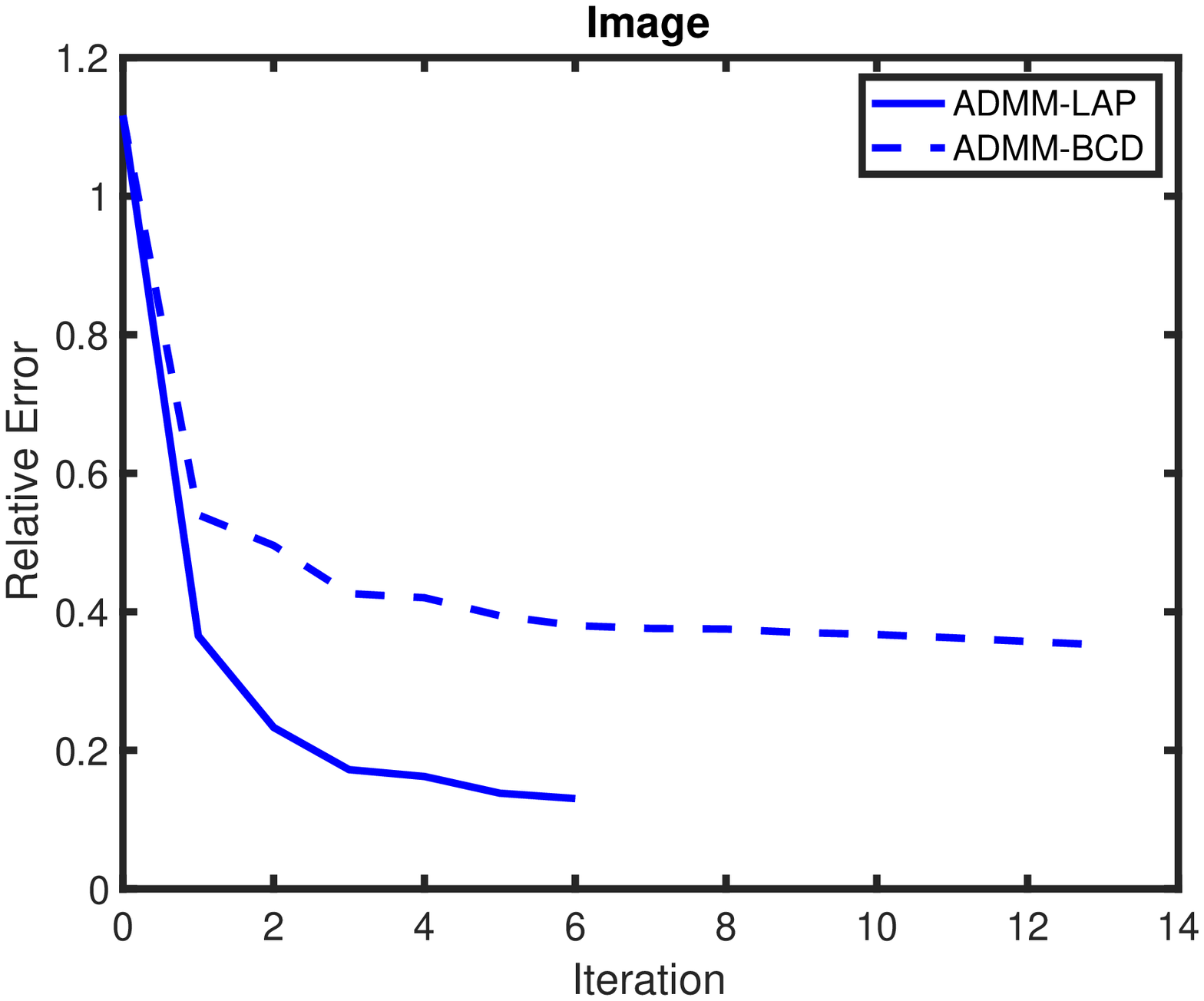}
	\end{subfigure}
	\qquad
	\begin{subfigure}
		\centering
		\includegraphics[width=2.5in]{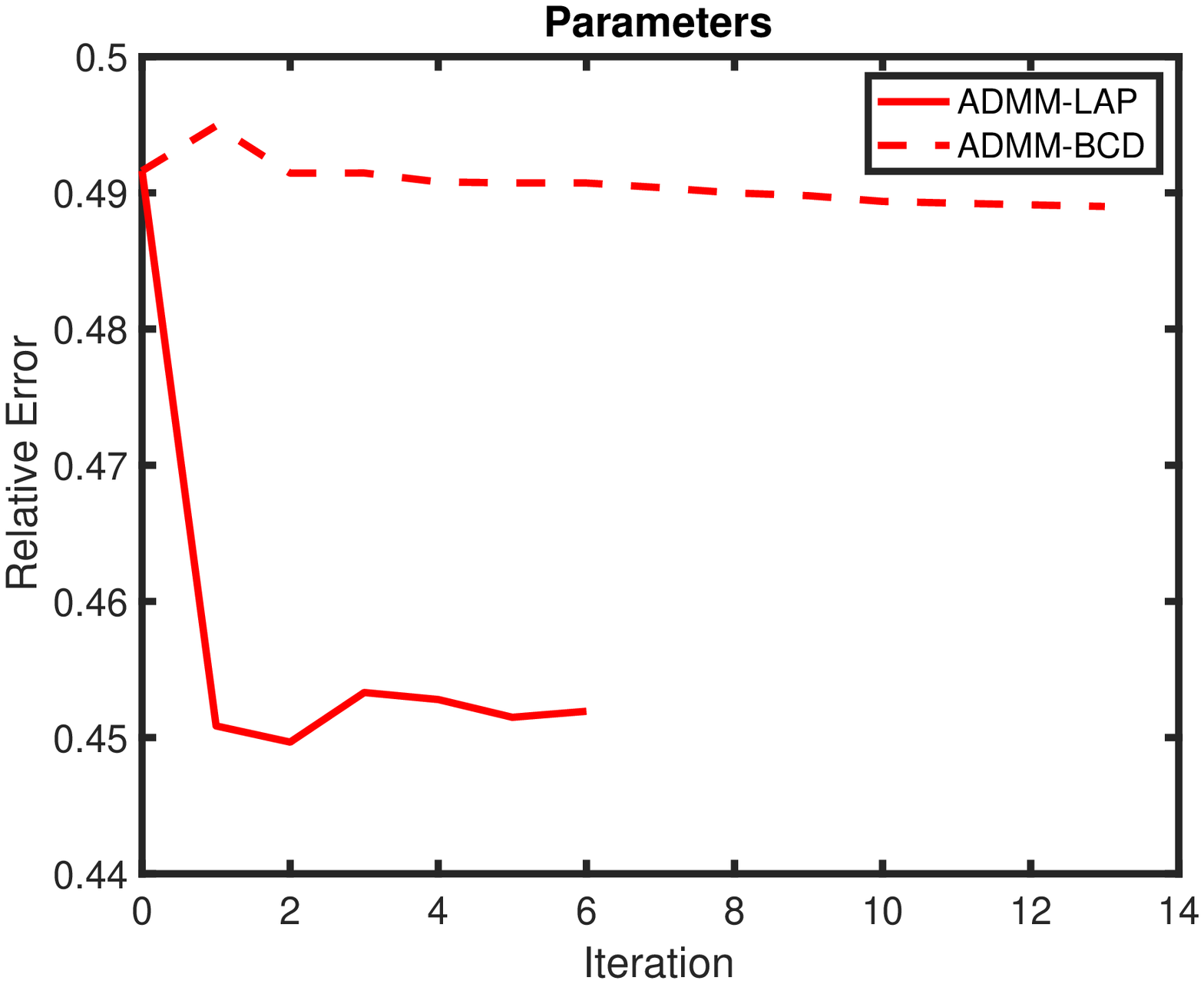}
	\end{subfigure}
	\caption{Relative errors of the restored image $\bm x$ (left) and the parameters $\bm w$ (right) for the myopic deconvolution problem in Example \ref{example3} with `mild' BlurLevel of \emph{PRblurdefocus} in the third PSF. }
	\label{fig:b}
\end{figure}

In Table \ref{tab:3}, we present the relative error of the restored image $\bm x$, the relative error of the parameters $\bm w$, the computational time and SNR of the restored images associated with two cases for both methods. We can also see from the sixth column that ADMM-LAP has evident advantage over ADMM-BCD in the computational time. Moreover, the seventh column shows that the quality of restored images obtained by ADMM-LAP is much better than those obtained by ADMM-BCD.

\begin{table}[H]
\centering
\caption{Numerical results of ADMM-LAP and ADMM-BCD for the retinal image in Example \ref{example3}. The columns from left to right give the BlurLevel, the method name,  the stopping iteration, the relative error of the restored image, the relative error of the solution parameters, the computational time in seconds and SNR of the restored images.}
\label{tab:3}
\begin{tabular}{ccccccc}
\noalign{\smallskip}\hline\noalign{\smallskip}
BlurLevel & Method&  $\#\rm iter$ &  ${\rm Rel. \ Err.} \ {\bm x}$ & ${\rm Rel. \ Err.} \ {\bm w}$ & Time/s & SNR($\bm x$)\\
\noalign{\smallskip}\hline\noalign{\smallskip}
`mild' &ADMM-LAP&  6 &  \textbf{1.30e-01} & \textbf{4.52e-01} &\textbf{134.03} &\textbf{10.68}\\
\noalign{\smallskip}\cline{2-7}\noalign{\smallskip}
 & ADMM-BCD  &  13 &  3.52e-01 & 4.89e-01& 267.21 &2.05\\
\noalign{\smallskip}\hline\noalign{\smallskip}
`severe' &ADMM-LAP&  11 &  \textbf{1.92e-01} & \textbf{1.79e-01} &\textbf{156.05} &\textbf{7.33}\\
\noalign{\smallskip}\cline{2-7}\noalign{\smallskip}
 & ADMM-BCD  &  10 &  2.18e-01 & 3.27e-01& 186.31 &6.19\\
\noalign{\smallskip}\hline\noalign{\smallskip}
\end{tabular}
\end{table}
\end{example}

\section{Conclusion}
\label{sec:5}
In this paper, we propose a new efficient ADMM-LAP method for solving large scale ill-posed inverse problems, and more specifically myopic deconvolution problems with TV regularization arising from the adaptive optics retinal image restoration. Specifically, ADMM is applied to tackle the nondifferentiable and nonlinear TV regularization term first, then LAP is applied to tackle tightly coupled $\left(\bm x,\bm w\right)$-subproblems appearing within each ADMM iteration.
The convergence results of ADMM-LAP are presented. Moreover, the efficiency of the proposed algorithm is demonstrated with a theoretical computational complexity analysis.
In our numerical experiments we show that the proposed ADMM-LAP method is superior to ADMM-BCD method in terms of both the accuracy and the efficiency. In our future work, we plan to exploit appropriate preconditioning techniques to further reduce the iteration number and the iteration time.

\section*{Acknowledgment}
JN acknowledges support from the U.S. National Science Foundation under grant DMS- 1819042 and the National Institutes of Health under grant 1R13EB028700-01. The authors also thank Dr. John M. Nickerson, Professor of Ophthalmology at Emory University for helpful discussions and for providing data used for numerical experiments in this paper.

\section*{References}

\end{document}